\documentclass[11pt,reqno]{amsart}

\allowdisplaybreaks
\usepackage[title]{appendix}
\usepackage{times}
\usepackage{amsmath,amsfonts, amstext,amssymb,amsbsy,amsopn,amsthm}
\usepackage[initials,alphabetic]{amsrefs}
\usepackage{mathrsfs}
\usepackage{bm}
\usepackage{dsfont}
\usepackage{esint}
\usepackage{graphicx}   % for figures
\usepackage{hyperref}
\usepackage[all]{xy}
\usepackage{relsize}
\usepackage{mathtools}
\usepackage{array}
\usepackage{verbatimbox}
\usepackage{xcolor}
\usepackage{dsfont}
\usepackage{graphicx}   % for figures
\usepackage{hyperref}
\usepackage{tikz}
\usepackage{enumitem}

\newcommand{\BR}{\mathbb{R}}
\newcommand{\BC}{\mathbb{C}}
\newcommand{\BZ}{\mathbb{Z}}
\newcommand{\SU}{\operatorname{SU}}
\newcommand{\SO}{\operatorname{SO}}
\newcommand{\Sp}{\operatorname{Sp}}
\newcommand{\Spin}{\operatorname{Spin}}

\newcommand{\seq}{\subseteq}
\newcommand{\wt}{\widetilde}
\newcommand{\non}{\nonumber}
\newcommand{\ad}{\operatorname{ad}}
\newcommand{\Ad}{\operatorname{Ad}}
\newcommand{\p}{\partial}
\newcommand{\vol}{\operatorname{vol}}

\newtheorem{theorem}{Theorem}[section]

\newtheorem{proposition}[theorem]{Proposition}
\newtheorem{lemma}[theorem]{Lemma}

\theoremstyle{definition}
\newtheorem{definition}[theorem]{Definition}
\theoremstyle{remark}
\newtheorem{remark}{Remark}[section]
\theoremstyle{remark}

\theoremstyle{remark}
\newtheorem{example}{Example}[section]

\theoremstyle{remark}

\theoremstyle{remark}

\theoremstyle{remark}

\begin{document}

\title{Partial Regularity of Stable Stationary Harmonic Maps into Certain Lie Groups}
\date{\today}
\author{Jacob Krantz}
\address{Department of Mathematics, Princeton University, Princeton, NJ 08540, USA}
\email{jk9945@princeton.edu}

\maketitle
\begin{abstract}
Let $M$ be a compact Riemannian manifold, and let $G$ be a compact simple Lie group with bi-invariant metric that is not $\Sp(n)$ for $n \geq 8$, $E_{8}$, $F_{4}$, or $G_{2}$. We show that the singular set of any stable stationary harmonic map $u : M \to G$ has Hausdorff codimension at least four. We also find examples of maps into these manifolds with codimension four singularities to show that we cannot reduce the dimension of the singular set any further.
\end{abstract}
\tableofcontents

\section{Introduction}
In 1982, Schoen and Uhlenbeck \cite{SU82} made significant progress in understanding the regularity of minimizing harmonic maps. They showed that the singular set of every minimizing harmonic map between compact manifolds must have Hausdorff codimension at least three. Furthermore, they provided a method to further reduce the dimension of the singular set: showing the nonexistence of nonconstant minimizing cone maps into the same target manifold. With this new tool in hand, Schoen and Uhlenbeck \cite{SU84} were also able to show that when the target manifold is an $n$-sphere, for $n > 2$, we can actually achieve better than codimension three regularity. In particular, for the three-sphere, they gave a proof that every minimizing harmonic map into $S^{3}$ has codimension four singular set by showing there are no nonconstant minimizing cone maps $\BR^{3} \to S^{3}$. They showed that cone stability implies an energy bound on the underlying map $S^{2} \to S^{3}$ which is only satisfied for constant maps. Our proof can be seen as a natural generalization of this argument to other Lie groups with bi-invariant metrics.

After these initial discoveries, it then became a natural question to ask when else we can achieve this improved regularity. As mentioned in the initial 1982 paper \cite{SU82}, a natural choice of target manifold is a Lie group. We will discuss this case below.

Many, including for example Xin \cite{Xin89} and Okayasu \cite{Oka94}, have improved our understanding of the regularity of minimizing harmonic maps in the more general context of homogeneous spaces. They have successfully improved our knowledge of the regularity of minimizing harmonic maps in many cases. More recently, Hong and Wang \cite{HW99} realized that in certain cases stable stationary harmonic maps satisfy similar compactness properties to minimizing harmonic maps. They were able to use this to study the regularity of stable stationary harmonic maps. Lin and Wang \cite{LW06} were able to use their work to study the regularity of stable stationary harmonic maps into spheres. In particular, they were able to confirm that stable stationary harmonic maps into $S^{3}$ have codimension four singular set. Hsu \cite{Hsu05} was able to deduce a similar result to \cite{HW99} when the target manifold has no stable harmonic maps from $S^{2}$. In particular, in this setting he was able to show the analogous compactness theorem. He then used this compactness theorem to show that if the target manifold does not admit any stable harmonic maps from $S^{2}$, then every stable stationary harmonic map into this manifold has codimension three singular set. This shows that stable stationary harmonic maps into compact Lie groups have codimension three singular set because there are no stable $S^{2}$'s in any compact Lie group. It turns out, however, that we can do even better. We will see that in many cases the singular set has codimension at least four. In particular, we show:

\begin{theorem} \label{mainthm}
Let $M$ be a compact manifold, and let $G$ be a compact simple Lie group that is not $\Sp(n)$ for $n \geq 8$, $E_{8}$, $F_{4}$, or $G_{2}$. Then the singular set of every stable stationary harmonic map $u : M \to G$ has Hausdorff codimension at least four.
\end{theorem}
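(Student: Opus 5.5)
We follow the Schoen--Uhlenbeck dimension-reduction scheme as adapted to stable stationary maps by Hong--Wang and Hsu. Since $G$ carries no stable harmonic spheres, Hsu's compactness theorem \cite{Hsu05} applies. It says that a sequence of stable stationary harmonic maps into $G$ with bounded energy subconverges strongly in $W^{1,2}$, the limit is again stable stationary, and the usual $\varepsilon$-regularity and monotonicity hold. This already gives codimension three. By Federer's dimension reduction, improving this to codimension four amounts to the following Liouville-type statement: every stable stationary tangent map that is homogeneous of degree zero, $u(x)=\phi(x/|x|)$ with $\phi:S^{2}\to G$ harmonic, and which is stable as a cone map $\BR^{3}\to G$, must be constant. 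Indeed, if the singular set had $\mathcal{H}^{m-3}$-positive measure, blowing up at a density point and splitting off lines would produce exactly such a nonconstant cone. Such a cone is smooth away from the origin by the codimension-three result, so $\phi$ is smooth.

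The core step generalizes the $S^{3}$ argument of \cite{SU84}. We test the second variation of energy on the cone $u$ with variations of the form $\eta(|x|)\,V(x/|x|)$, where $\eta$ is a radial cutoff and $V$ is a vector field along $\phi$. Optimizing over the radial profile $\eta$, as Schoen--Uhlenbeck do, cone stability yields
\[
\int_{S^{2}} |\nabla V|^{2} - \langle R(V,d\phi)d\phi, V\rangle \;\geq\; -\tfrac14\int_{S^{2}}|V|^{2}
\]
for all admissible $V$. This is the factor coming from the Hardy-type constant $\tfrac{(n-2)^{2}}{4}$ with $n=3$. For $V$ we use left- and right-invariant fields restricted along $\phi$, namely $V=\phi\cdot X$ and $V=X\cdot \phi$ with $X\in\mathfrak g$. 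On a bi-invariant group these are Killing fields, and the curvature is $R(X,Y)Y=\tfrac14|[X,Y]|^{2}$-type. Summing over an orthonormal basis of $\mathfrak g$, the $|\nabla V|^{2}$ terms become $\tfrac14$ of Casimir-type expressions in $|d\phi|^{2}$. The curvature terms become $\tfrac14\,c_{\mathfrak g}\,|d\phi|^{2}$, where $c_{\mathfrak g}$ is the Casimir eigenvalue of the adjoint representation in the normalized metric. The right side becomes $-\tfrac14\dim\mathfrak g\,\vol(S^{2})$.

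More effectively, we will not use the adjoint representation but let $V$ range over a better family. For a unitary representation $\rho:G\to U(N)$, take the fields $V=\rho^{*}$ of coordinate functions, that is, gradients of matrix coefficients, in the spirit of Xin and Okayasu. This produces an inequality of the form
\[
\bigl(a_\rho - b_\rho\bigr)\int_{S^{2}}|d\phi|^{2}\;\le\;\tfrac14 c_\rho\,\vol(S^{2}),
\]
with representation constants $a_\rho,b_\rho,c_\rho$ depending on the Casimir of $\rho$ and of $\mathfrak g$. We then need to combine this with a lower energy bound for nonconstant harmonic $\phi:S^{2}\to G$: such maps have energy at least $4\pi$ times a constant determined by the minimal harmonic $2$-sphere, via the Uhlenbeck/Segal factorization into unitons and the fact that energy is quantized in $4\pi\cdot\mathbb Z$ in a suitable normalization. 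The two bounds contradict unless the representation constants are unfavorable. The main obstacle, and where the case restriction enters, is choosing $\rho$ (standard representation for $\SU(n)$, $\SO(n)$, $\Sp(n)$; minimal representations for $E_6,E_7$) and checking the numerical inequality. We expect it to fail precisely for $\Sp(n)$ with $n\ge8$, $E_8$, $F_4$, and $G_2$, where the ratio of the Casimir of the smallest representation to its dimension is too large. For $\SO(n)$ and $\Spin(n)$ we must also handle the quotient and covering issues: stability is local, so tangent maps lift, and finite quotients are harmless. Once the cone is shown to be constant, the standard Federer argument concludes $\dim_{\mathcal H}\operatorname{sing}(u)\le m-4$.
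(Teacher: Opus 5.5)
\paragraph{Where the proof breaks.} Your reduction to homogeneous cones and the inequality $\int_{S^{2}}|\nabla V|^{2}-\langle R(V,d\phi)d\phi,V\rangle+\tfrac14|V|^{2}\ge 0$ match the paper. The gap is the core step: you never produce a variation that violates this inequality, and the families you propose cannot produce one.

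Left- and right-invariant fields are Killing fields. Since $\phi$ is harmonic, $I(V,V)=\frac{d^{2}}{dt^{2}}E[\Phi_{t}\circ\phi]|_{t=0}=0$ for them, so summing only gives $0\ge-\pi\dim G$.

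Now take tangential projections $v^{\top}$ of constant matrices, i.e.\ gradients of matrix coefficients of an irreducible unitary $\rho$. By $G\times G$-equivariance, the averaged integrand is an $\Ad$-invariant quadratic form on the simple algebra $\mathfrak g$, hence a multiple of the metric. In the Killing normalization, with the Casimir $C_{\rho}$ scaled so that $C_{\mathrm{ad}}=1$, one computes
\[
\sum_{v}I(v^{\top},v^{\top})=\left(C_{\rho}-\tfrac12\right)\int_{S^{2}}|d\phi|^{2},\qquad \sum_{v}|v^{\top}|^{2}=\dim G .
\]
So averaged cone stability says exactly $(1-2C_{\rho})E[\phi]\le\pi\dim G$.

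\paragraph{Why the averaged bound is too weak.} For every nontrivial representation of $\Spin(n)$ with $n\ge7$, and of $E_{6}$ and $E_{7}$, one has $C_{\rho}>\tfrac12$. (The condition $C_{\rho}<\tfrac12$ is Smith's criterion $\lambda_{1}<2\,\mathrm{Ric}$ for instability of the identity, and among compact simple groups it holds only for $\SU(n)$ and $\Sp(n)$.) So your inequality is vacuous for these groups, which are in the theorem.

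Even for $\SU(n)$ and $\Sp(n)$, the bound $E[\phi]\le\pi\dim G/(1-2C_{\rho})$ exceeds the minimal harmonic-sphere energy $16\pi n_{G}$, except for $\SU(2)$. For $\SU(3)$ it reads $E\le72\pi$. Yet $2\pi_{L}-I$, where $\pi_{L}$ is projection onto the tautological line of a linear $\mathbb{CP}^{1}\subset\mathbb{CP}^{2}$, is a harmonic sphere in $\SU(3)$ of energy $48\pi$.

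So this route recovers only the Schoen--Uhlenbeck $S^{3}$ case. Your list of exceptions (and the Casimir-to-dimension explanation for it) is a guess, not a consequence of the method.

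\paragraph{The missing idea.} You need a single destabilizing direction whose second variation is large compared with its $L^{2}$ norm. The paper obtains it from the Burstall--Rawnsley factorization.

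Write $\phi=\phi_{0}\exp(\pi\sigma)$, where $\sigma$ is a flag factor into a Hermitian symmetric space $H$ that you are free to choose. The curve $\phi_{t}=\phi_{0}\exp(t\sigma)$ is pointwise geodesic. The flag-factor identities $d\sigma=[\sigma^{*}\beta,\sigma]$, $\sigma^{*}\beta\in\mathfrak g_{-1}\oplus\mathfrak g_{1}$ and $(\sigma^{*}\beta)^{1,0}_{1}=-\tfrac12(\phi_{0}^{*}\theta)^{1,0}_{1}$ give
\[
E[\phi_{t}]=E[\phi_{0}]+(1-\cos t)C,\qquad 2C=E[\phi]-E[\phi_{0}]\ge16\pi n_{G}.
\]
Hence $I(\sigma,\sigma)=-C\le-8\pi n_{G}$. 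Meanwhile $|\sigma|\equiv|\xi_{i}|$ is constant on $S^{2}$, so cone stability would force $|\xi_{i}|^{2}\ge8n_{G}$.

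Now choose $\alpha_{i}$ with coefficient one in the highest root and compute $|\xi_{i}|^{2}=-\sum_{\alpha}\alpha(\xi_{i})^{2}$. This gives $2n$, $4n-2$ and $4n-4$ for $A_{n}$, $B_{n}$ and $D_{n}$, and at most $|\Delta|$ for $E_{6}$ and $E_{7}$, all below $8n_{G}$. For $C_{n}$ the only admissible root gives $n(n+1)$. That value, together with the absence of Hermitian symmetric quotients for $E_{8}$, $F_{4}$ and $G_{2}$, is where the exclusions actually come from.

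For $\SU(3)$ this direction has $\tfrac14\int|\sigma|^{2}=4\pi$ against $I(\sigma,\sigma)\le-24\pi$. Averaging over $\dim G$ directions dilutes this concentrated negativity, which is why your approach falls short.
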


\begin{remark}
Said differently, we will show that all stable stationary harmonic maps from a compact manifold into $\SU(n)$ for any $n$, $\Spin(n)$ for $n \geq 5$, $\Sp(n)$ for $n \leq 7$, $E_{6}$, and $E_{7}$ are smooth away from a subset of the domain of codimension at least four. Note that this completes the proof that stable stationary harmonic maps into $\SO(n)$ are smooth away from a codimension four set because this result was already known for $n \leq 4.$
\end{remark}

\begin{remark}
We will prove this theorem in Section \ref{mainthmsection}. After the proof we will discuss extending these results to certain quotients of these groups.
\end{remark}

Note that in general one should not expect similar regularity for stable stationary harmonic maps into compact symmetric spaces. Indeed, as a consequence of Theorem \ref{stableconeimmersion} below, we see that many compact symmetric spaces admit stable stationary harmonic maps with codimension three singularities. Morally, this is because the Helgason spheres are stable as minimal submanifolds (see \cite{Hel66} and \cite{Ohn87}).

In order to prove Theorem \ref{mainthm}, we must study the harmonic $S^{2}$'s in Lie groups. Since the 1980's, again starting with Uhlenbeck \cite{Uhl89}, many have studied how to write down these harmonic maps from $S^{2}$ to $G$ as simply and explicitly as possible. One key example is the work of Burstall and Rawnsley \cite{BR90}. They provided a factorization theorem which explicitly expresses a given harmonic map as a sequence of so-called ``flag transformations." Their proof shows that, for certain $G$, every harmonic map $S^{2} \to G$ can be constructed explicitly by iterated flag transformations of a constant map. Moreover, they show that each flag transformation increases the energy of the map by an integer multiple of $\pi.$ We will state their result more precisely in Theorem \ref{factorization} below.

To prove Theorem \ref{mainthm}, we show that a factorization of this form is generally incompatible with cone stability. The proof will proceed by contradiction. We assume that we have found a nonconstant map $\phi : S^{2} \to G$ that is cone stable and would like to show this is impossible. We first prove that harmonic maps with stable cone map satisfy an inequality very similar to the usual stability inequality. For the sake of brevity this will be referred to as the cone stability inequality. We then use the factorization of harmonic $S^{2}$ in $G$ of \cite{BR90} to find a variation that will decrease the energy. From here, we vary our map $\phi$ with our given variation and compute the Hessian of the energy at $\phi$ in the direction of this variation. This turns out to be bounded above by a negative constant depending only on the Lie algebra of $G$ and the multiple of the Killing form we choose for our bi-invariant metric. We then evaluate our cone stability inequality to show that the $L^{2}$ norm of our variation is bounded below by negative four times this constant if our map is indeed cone stable. From here, we show using explicit computation that our variation actually fails to satisfy this inequality, contradicting the cone stability inequality. Thus, our harmonic map $S^{2} \to G$ is not cone stable, and consequently, we can apply dimension reduction to conclude partial regularity.
\bigskip
\section{Acknowledgments}
I want to thank Professor Naber for his invaluable guidance and support. I am very grateful for everything that he has taught me and the time he has spent helping me.
\bigskip
\section{Preliminaries and Notation}
Let $(M^{m},g)$ and $(N^{n},h)$ be compact Riemannian manifolds. We will be studying the regularity of maps $u : M \to N$ that are critical points of the Dirichlet energy. Let us assume that $N$ is isometrically embedded in $\BR^{K}$ for some large enough $K.$ We define
\begin{align}
W^{1,2}(M,N) = \{u : M \to \BR^{K} : u \in W^{1,2}(M,\BR^{K}) \mathrm{~and~} u(x) \in N \mathrm{~a.e.}\} 
\end{align}
We define the Dirichlet energy of a map $u \in W^{1,2}(M,N)$ to be
\begin{align}
E[u] = \dfrac{1}{2}\int_{M}|\nabla u|^{2}dx\label{DirichletEnergy}
\end{align}
Now, let $A$ be the second fundamental form of the isometric embedding $N \hookrightarrow \BR^{K}$. The Euler-Lagrange equations for $\eqref{DirichletEnergy}$ can be written explicitly as
\begin{align}
-\Delta u = A(\nabla u, \nabla u) \label{ELeq}
\end{align}
Note that the Euler-Lagrange equations express criticality of $u$ with respect to variations along the image of $u.$ In the case of harmonic maps into Lie groups with bi-invariant metrics, the Euler-Lagrange equations can be written very compactly: 
\begin{align}
d^{*}u^{*}\theta = 0 
\end{align}
where $\theta$ is the left Maurer-Cartan form of our Lie group $G$.
\begin{definition}
A map $u \in W^{1,2}(M,N)$ solving equation \eqref{ELeq} weakly is called a weakly harmonic map.
\end{definition}
When the dimension, $m$, of $M$ is at most two, every weakly harmonic map is known to be smooth. However, once $m$ is at least three, singularities can form. To this end, we define the regular set
\begin{align}
\operatorname{Reg}(u) = \{x \in M : u \mathrm{~is~smooth~in~some~ball~around~}x\}
\end{align}
and the singular set
\begin{align}
\operatorname{Sing}(u) = M \setminus \operatorname{Reg}(u).
\end{align}
Weakly harmonic maps can have a very large singular set. In fact, Rivi\`ere \cite{Riv95} has constructed examples of harmonic maps that are everywhere discontinuous. This complete lack of regularity is a result of the fact that when $m$ is at least three, solving the Euler-Lagrange equations is insufficient to be a critical point of the energy functional. In particular, there can exist so-called ``domain variations" for which a weak solution is not a critical point. Domain variations are constructed by pulling back a map $u$ by the flow generated by a smooth compactly supported vector field on $M$. Requiring that $u$ must also be critical with respect to these domain variations, we obtain the stationary equation:
\begin{align}
\operatorname{div}\left (\dfrac{1}{2}|\nabla u|^{2}g - u^{*}h\right ) = 0. \label{stateq}
\end{align}
Note that of course this is a weak equality.
\begin{definition}
A map $u$ that solves \eqref{ELeq} and \eqref{stateq} will be called a stationary harmonic map. Such maps are critical points of the energy \eqref{DirichletEnergy}. 
\end{definition}
By an argument of Bethuel \cite{Bet93}, stationary harmonic maps are known to be smooth away from a Hausdorff codimension two measure zero set. If we additionally assume control over the second variation, it is sometimes possible to attain much better regularity. To this end, the second variation of the energy is given by the following formula:
\begin{align}
I^{u}(X) = \nabla^{2}E[u](X,X) = \int_{M}|\nabla^{u} X|^{2} - h(R^{N}(X,\nabla_{i} u)\nabla_{i} u, X)dx \label{stabineq}
\end{align}
where $X \in \Gamma(M,u^{*}TN)$ is compactly supported. Here, we write $\nabla^{N}$ for the Levi-Civita connection of $(N,h)$ and $\nabla^{u}$ for the pullback connection $u^{*}\nabla^{N}$.
\begin{definition}
We say that a weakly harmonic map is stable if for every compactly supported $X \in \Gamma(M,u^{*}TN)$, we get $\nabla^{2}E[u](X,X) \geq 0.$ A stationary map satisfying this stability inequality will be called a stable stationary harmonic map.
\end{definition}
\begin{definition}
A map that minimizes the energy \eqref{DirichletEnergy} over all maps with the same fixed boundary data will be called a minimizing harmonic map.
\end{definition}
Schoen and Uhlenbeck show in \cite{SU82} that the nonexistence of nonconstant minimizing cone maps implies a lack of low-codimension singularities for minimizing maps:
\begin{theorem}[\cite{SU82}]
If there are no nonconstant minimizing cone maps $\BR^{k} \to N$ for $k = 3,\ldots,\ell$, then for any minimizing harmonic map $u : M \to N$, the Hausdorff dimension of $\operatorname{Sing}(u)$ is at most $m - \ell - 1.$
\end{theorem}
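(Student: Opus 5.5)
The plan is the classical Federer dimension-reduction argument of Schoen--Uhlenbeck, run by contradiction. Suppose $\mathcal{H}^{s}(\operatorname{Sing}(u)) > 0$ for some $s > m-\ell-1$. The ingredients are:
\begin{itemize}
\item the $\epsilon$-regularity theorem, which says that small normalized energy $r^{2-m}\int_{B_r(x)}|\nabla u|^2$ forces smoothness near $x$;
\item the monotonicity formula for $r^{2-m}\int_{B_r(x)}|\nabla u|^2$, which for minimizers follows from the stationary equation \eqref{stateq};
\item the compactness theorem for minimizing maps: bounded-energy sequences of minimizers converge strongly in $W^{1,2}_{\mathrm{loc}}$ to a minimizer, and singular sets converge in the Hausdorff sense, so limits of singular points are singular.
\end{itemize}

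\textbf{Step 1: blow-ups are cones.} Fix $x\in\operatorname{Sing}(u)$ and form the rescalings $u_{x,r}(y)=u(x+ry)$. By monotonicity their energy on the unit ball is uniformly bounded, so by compactness a subsequence converges to a minimizing map $\phi:\BR^m\to N$. The monotonicity identity contains the term $\int |\partial_r u|^2|y|^{2-m}$, and in the limit this term vanishes, so $\phi$ is homogeneous of degree zero, i.e.\ a cone map. Because $x$ was singular, $\epsilon$-regularity keeps the normalized energy bounded below, so $0\in\operatorname{Sing}(\phi)$ and $\phi$ is nonconstant.

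\textbf{Step 2: density points and their tangent maps.} Choose $x$ to be a point of positive upper $s$-density of $\operatorname{Sing}(u)$, which is possible for $\mathcal{H}^s$-a.e.\ point. By upper semicontinuity of $\mathcal{H}^s_\infty$ under Hausdorff convergence of singular sets, the tangent map $\phi$ then satisfies $\mathcal{H}^s_\infty(\operatorname{Sing}(\phi))>0$.

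\textbf{Step 3: split off lines inductively.} The set $S(\phi)=\{y:\phi \text{ is homogeneous about } y\}$ is a linear subspace along which $\phi$ is translation invariant. If $\operatorname{Sing}(\phi)\ne S(\phi)$, pick a density point $y\notin S(\phi)$ and blow up $\phi$ at $y$. The resulting tangent map is invariant along $S(\phi)$ and also along the direction of $y$, so its spine has strictly larger dimension, and it still carries a singular set of positive $\mathcal{H}^s_\infty$ measure.

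\textbf{Step 4: reduce to a low-dimensional cone.} Iterating Step 3 at most $m$ times produces a minimizing cone $\phi$ with $\operatorname{Sing}(\phi)=S(\phi)$ a linear subspace of dimension $d\ge s>m-\ell-1$. Writing $\phi=\psi\circ\pi$, where $\pi$ is projection onto $S(\phi)^\perp\cong\BR^{k}$ with $k=m-d<\ell+1$, we get a nonconstant minimizing cone $\psi:\BR^k\to N$. Minimality of $\psi$ follows from minimality of $\phi$ by a cutoff argument along $S(\phi)$. The cases $k\le 2$ are excluded since minimizers in those dimensions are regular, forcing the cone to be constant; equivalently, $\operatorname{Sing}$ already has codimension at least $3$. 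So $3\le k\le\ell$, contradicting the hypothesis.

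The main obstacle is the compactness theorem in Step 1. Strong $W^{1,2}$ convergence of minimizers, and the fact that the limit is again minimizing, need the Luckhaus-type interpolation/extension lemma to build competitors that glue the limit to the sequence on thin annuli. Once that is in place, the remaining measure-theoretic steps are standard Federer reduction.
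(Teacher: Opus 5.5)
Your proposal is correct and is the standard Schoen--Uhlenbeck (Federer) dimension-reduction argument; the paper does not prove this statement itself but cites \cite{SU82}, remarking only that cone maps are the potential singularities. One small fix is needed in the stopping rule of Step 3: density points of $\operatorname{Sing}(\phi)$ off $S(\phi)$ are only guaranteed while $\dim S(\phi) < s$, so you should stop as soon as $\dim S(\phi) \geq s$, which already gives the nonconstant minimizing cone $\psi$ on $S(\phi)^{\perp} \cong \BR^{k}$ with $3 \leq k \leq m - s < \ell + 1$.
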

Essentially, this is because the cone maps are the potential singularities. Here, we require only that our harmonic map be both stationary and stable. In this case, we can apply the relevant compactness theory in \cite{Hsu05}. In this paper, Hsu was able to show that the analogous result holds for stable stationary harmonic maps when there are no stable harmonic maps $S^{2} \to N$. Such is the case when $N$ is a compact Lie group.
\bigskip
\section{Harmonic Maps into Lie Groups via Burstall-Rawnsley}
Let $G$ be a compact simple Lie group. In this section, we will outline the procedure for factoring weakly harmonic maps $\phi : S^{2} \to G$ given by Burstall and Rawnsley in \cite{BR90}. In order to describe this factorization, we will first need to review the basics of root systems and parabolic subalgebras. For more details, the reader is encouraged to read \cite{BR90}. Our presentation of this material will follow theirs very closely.

We will start off with root systems. Again, let $G$ be a compact simple Lie group. Let $\mathfrak{g}$ be the Lie algebra of $G$, and call its complexification $\mathfrak{g}^{\BC}$. Let $\mathfrak{t}$ be a maximal toral subalgebra of $\mathfrak{g}$ with complexification $\mathfrak{t}^{\BC} \seq \mathfrak{g}^{\BC}$. We can see that $\mathfrak{t}^{\BC}$ is a Cartan subalgebra of $\mathfrak{g}^{\BC}$, i.e., it is a maximal abelian subalgebra consisting of semisimple elements. 
\begin{definition}
The roots of $\mathfrak{g}^{\BC}$ with respect to our choice of Cartan subalgebra are the nonzero elements $\alpha \in (\mathfrak{t}^{\BC})^{*}$ such that the space 
\begin{align}
\mathfrak{g}^{\alpha} = \{X \in \mathfrak{g}^{\BC}: \mathrm{for~all~}H \in \mathfrak{t}^{\BC}, \ad(H)X = \alpha(H)X\}
\end{align}
is not equal to $\{0\}$. The set of roots will be denoted $\Delta = \Delta(\mathfrak{g}^{\BC},\mathfrak{t}^{\BC})$, and $\mathfrak{g}^{\alpha}$ will be called a root space when it is not equal to $\{0\}$.
\end{definition}

We can split the set of roots in half in a natural way to make a choice of the so-called positive roots $\Delta^{+} \seq \Delta$.

\begin{definition}
Let $\Delta^{+} \seq \Delta$. We say that $\Delta^{+}$ is a choice of a positive root system if all of the following hold:
\begin{itemize}
\item Let $\alpha,\beta \in \Delta^{+}$. If $\alpha + \beta$ is a root, then $\alpha + \beta \in \Delta^{+}.$
\item $\Delta^{+} \cup -\Delta^{+} = \Delta$
\item $\Delta^{+} \cap -\Delta^{+} = \varnothing.$
\end{itemize}
\end{definition}

These positive roots contain a further subset called the simple roots. The simple roots are the positive roots that cannot be written as the sum of any two positive roots. Call the simple roots $\alpha_{1},\ldots,\alpha_{\ell}$. Here, $\ell = \dim \mathfrak{t}.$ Any root can be decomposed into an integer linear combination of the simple roots, and any positive root will have non-negative coefficients in the $\alpha_{i}.$

The roots are useful for us because they give us a clean way to construct parabolic subalgebras of $\mathfrak{g}^{\BC}$. In particular, there is a bijection between conjugacy classes of parabolic subalgebras and subsets of the simple roots. To begin with, we should define a parabolic subalgebra:
\begin{definition}
A parabolic subalgebra $\mathfrak{q} \seq \mathfrak{g}^{\BC}$ is one that contains a maximal solvable subalgebra.
\end{definition}
We can now talk about how to construct all of the parabolic subalgebras of $\mathfrak{g}^{\BC}$. 
\begin{theorem}[\cite{BR90} Theorem 4.1, \cite{FH91} pp.~395]\label{parabolictheorem}
Let $I \seq \{1,\ldots,\ell\}$ be the index set of a subset of the simple roots $\alpha_{1},\ldots,\alpha_{\ell}$ corresponding to a fixed choice of positive root system $\Delta^{+} \seq \Delta(\mathfrak{g}^{\BC},\mathfrak{t}^{\BC})$. Now, given a root $\alpha$, we can express it in terms of the simple roots: $\alpha = \sum_{i = 1}^{\ell}n_{i}\alpha_{i}$. Define $n_{I}(\alpha) = \sum_{i \in I}n_{i}.$ The subalgebra 
\begin{align}
\mathfrak{q}_{I} = \mathfrak{t}^{\BC} \oplus \sum_{n_{I}(\alpha) \geq 0}\mathfrak{g}^{\alpha}
\end{align}
is parabolic. Every parabolic subalgebra is conjugate to $\mathfrak{q}_{I}$ for some choice of $I$.
\end{theorem}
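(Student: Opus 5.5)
The argument has two parts: first that $\mathfrak{q}_{I}$ is a subalgebra containing a Borel subalgebra, hence parabolic; second, that every parabolic subalgebra is conjugate to some $\mathfrak{q}_{I}$.

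\emph{$\mathfrak{q}_{I}$ is a subalgebra.} The key observation is that $n_{I}$ is additive. If $\alpha,\beta,\alpha+\beta$ are roots, then $n_{I}(\alpha+\beta)=n_{I}(\alpha)+n_{I}(\beta)$. Combined with $[\mathfrak{g}^{\alpha},\mathfrak{g}^{\beta}]\seq\mathfrak{g}^{\alpha+\beta}$, this gives closure under brackets of root spaces with $n_{I}\ge 0$. When $\beta=-\alpha$, both $n_{I}(\alpha)\geq 0$ and $n_{I}(-\alpha)=-n_{I}(\alpha)\ge0$ force $n_{I}(\alpha)=0$, and $[\mathfrak{g}^{\alpha},\mathfrak{g}^{-\alpha}]\seq\mathfrak{t}^{\BC}$. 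Brackets with $\mathfrak{t}^{\BC}$ preserve each root space. So $\mathfrak{q}_{I}$ is a subalgebra.

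\emph{$\mathfrak{q}_{I}$ contains a maximal solvable subalgebra.} Every positive root has nonnegative coefficients, so $n_{I}(\alpha)\ge 0$ for all $\alpha\in\Delta^{+}$. Hence $\mathfrak{q}_{I}$ contains $\mathfrak{b}=\mathfrak{t}^{\BC}\oplus\sum_{\alpha\in\Delta^{+}}\mathfrak{g}^{\alpha}$. This $\mathfrak{b}$ is solvable: its derived algebra $\sum_{\alpha\in\Delta^{+}}\mathfrak{g}^{\alpha}$ is nilpotent, since iterated brackets raise the height. It is also maximal solvable. Any strictly larger subalgebra that is $\mathfrak{t}^{\BC}$-stable would contain some $\mathfrak{g}^{-\alpha}$ with $\alpha$ positive, and hence a copy of $\mathfrak{sl}_{2}$ spanned by $\mathfrak{g}^{\pm\alpha}$ and $[\mathfrak{g}^{\alpha},\mathfrak{g}^{-\alpha}]$, which is not solvable. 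To reduce to the $\mathfrak{t}^{\BC}$-stable case, note that any subalgebra $\mathfrak{s}\supseteq\mathfrak{b}$ is stable under $\ad(\mathfrak{t}^{\BC})$, because $\mathfrak{t}^{\BC}\seq\mathfrak{s}$. Since $\mathfrak{t}^{\BC}$ acts semisimply, $\mathfrak{s}$ is then a sum of root spaces together with $\mathfrak{t}^{\BC}$. This proves that $\mathfrak{q}_{I}$ is parabolic.

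\emph{Every parabolic subalgebra is conjugate to some $\mathfrak{q}_{I}$.} Let $\mathfrak{q}$ be parabolic, containing a maximal solvable subalgebra $\mathfrak{b}'$. By the conjugacy of Borel subalgebras, which is a standard theorem, we may conjugate so that $\mathfrak{b}'=\mathfrak{b}$. As above, $\mathfrak{q}$ is then $\mathfrak{t}^{\BC}\oplus\sum_{\alpha\in S}\mathfrak{g}^{\alpha}$ with $S\supseteq\Delta^{+}$ closed under addition of roots. Let $I$ be the set of indices $i$ with $-\alpha_{i}\notin S$. We claim $S=\{\alpha: n_{I}(\alpha)\ge 0\}$.

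For one inclusion, take a negative root $-\beta$ whose simple-root expansion involves only indices outside $I$. Then $-\beta$ can be written as a sum of $-\alpha_{j}$, $j\notin I$, with every partial sum a root. All these $-\alpha_{j}$ lie in $S$, and $S$ is closed under addition, so $-\beta\in S$.

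For the other inclusion, suppose $-\beta\in S$ with $\beta$ positive. We want every simple root in $\beta$'s support to satisfy $-\alpha_{i}\in S$. Since $S\supseteq\Delta^{+}$ and $S$ is closed, we can add suitable positive simple roots to $-\beta$ step by step, staying inside the root system. This strips $-\beta$ down until it becomes $-\alpha_{i}$ for each $i$ in its support.

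The main obstacle is this last step, the reverse inclusion showing $S$ is exactly of the form $\{n_{I}\ge0\}$. It needs a root-string argument, namely that for a positive root $\beta$ of height at least $2$ one can subtract a simple root and remain a root, applied carefully at each stage. The cited sources \cite{BR90,FH91} handle this, and I would follow their argument there.
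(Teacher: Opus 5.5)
The paper gives no proof of this statement; it quotes it from Burstall--Rawnsley and Fulton--Harris, so your argument stands on its own. Your first two parts are fine: $\mathfrak{q}_{I}$ is a subalgebra containing $\mathfrak{b}$, and $\mathfrak{b}$ is maximal solvable by the $\mathfrak{t}^{\BC}$-stability and $\mathfrak{sl}_{2}$ argument. The inclusion $\{\alpha : n_{I}(\alpha)\geq 0\}\seq S$ is also fine. Note that ``$S$ is closed under addition'' relies on $[\mathfrak{g}^{\alpha},\mathfrak{g}^{\beta}]=\mathfrak{g}^{\alpha+\beta}$ being nonzero whenever $\alpha,\beta,\alpha+\beta\in\Delta$, which you should state explicitly.

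The gap is the reverse inclusion, which you defer to the references. Your plan, moving from $-\beta$ to $-\alpha_{i}$ by adding simple roots for each $i$ in the support of $\beta$, needs a \emph{targeted} descent. That is, you need a chain of positive roots from $\beta$ down to a \emph{prescribed} $\alpha_{i}$, each step removing one simple root. The fact you cite, that some simple root can be subtracted from a positive root of height at least $2$, only gives a chain ending at \emph{some} simple root. Such a chain may drop $\alpha_{i}$ from the support along the way. The targeted statement is true, but it needs an extra argument you have not supplied.

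The missing idea is that no targeting is needed, because closure of $S$ gives you both pieces at every step. Suppose $-\beta\in S$ and $\operatorname{ht}(\beta)\geq 2$, and choose $j$ with $\beta-\alpha_{j}\in\Delta^{+}$. Closure applied to $-\beta$ and $\alpha_{j}\in S$ gives $-(\beta-\alpha_{j})\in S$. Closure applied to $-\beta$ and $\beta-\alpha_{j}\in\Delta^{+}\seq S$ gives $-\alpha_{j}\in S$. The support of $\beta$ is $\{j\}$ together with the support of $\beta-\alpha_{j}$. So induction on height gives $-\alpha_{i}\in S$ for every $i$ in the support of $\beta$. Hence $n_{I}(\beta)=0$ and $n_{I}(-\beta)\geq 0$. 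With this short induction in place of the deferred root-string argument, your proof is complete.
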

It turns out that we can determine the entire parabolic by a single element of $\mathfrak{g}.$ Let $\mathfrak{q}$ be a parabolic subalgebra of $\mathfrak{g}^{\BC}$. Find a maximal toral subalgebra $\mathfrak{t} \seq \mathfrak{q} \cap \mathfrak{g}$. Complexify $\mathfrak{t}$ to get a Cartan subalgebra $\mathfrak{t}^{\BC}$ of $\mathfrak{g}^{\BC}.$ Refining Theorem \ref{parabolictheorem}, it turns out that we can find a choice of $\Delta^{+} \seq \Delta(\mathfrak{g}^{\BC},\mathfrak{t}^{\BC})$ such that $\mathfrak{q} = \mathfrak{q}_{I}$ for an index set corresponding to a subset of the simple roots. 

By skew-symmetry of $\ad$ with respect to the Killing form, we conclude that $\Delta(\mathfrak{g}^{\BC},\mathfrak{t}^{\BC}) \seq \sqrt{-1}\mathfrak{t}^{*}.$ Now, if again $\alpha_{1},\ldots,\alpha_{\ell}$ are our simple roots as before, we define dual vectors $\xi_{1},\ldots,\xi_{\ell} \in \mathfrak{t}$ by $\alpha_{i}(\xi_{j}) = \sqrt{-1}\delta_{ij}$. This is possible because $\ell = \dim \mathfrak{t}.$ We set $\xi = \sum_{i \in I}\xi_{i}$ and call this $\xi$ the canonical element of $\mathfrak{q}$. 
\begin{definition}
The element $\xi = \sum_{i \in I}\xi_{i}$ of $\mathfrak{t}$ is called the canonical element of $\mathfrak{q}.$
\end{definition}
It turns out that the eigenvalues of $\ad(\xi)$ are elements of $\sqrt{-1}\BZ$. If we define $\mathfrak{g}_{k}$ to be the $\sqrt{-1}k$-eigenspace of $\ad(\xi)$, we get a nice decomposition $\mathfrak{q} = \sum_{k \geq 0}\mathfrak{g}_{k}$. We also get a decomposition of the nilradical of $\mathfrak{q}$, $\mathfrak{n} = \sum_{k \geq 1}\mathfrak{g}_{k}.$ It will also be useful to know that the Adjoint orbit of a given canonical element is a flag manifold embedded Ad-equivariantly in $\mathfrak{g}$. This means that a flag manifold can be viewed as a set of canonical elements. Furthermore, every flag manifold of $G$ can be understood in this way.

These canonical elements are important for us because they describe a natural way to build harmonic maps. The first step in this direction is defining flag factors. Let $F$ be a flag manifold of $G$, embedded in $\mathfrak{g}$ as described above. It is useful to note that if $G^{\BC}$ is the complexification of $G$, we can find a parabolic subgroup $P$ of $G^{\BC}$ such that 
\begin{align}
F = G^{\BC}/P = G/(G \cap P) \hookrightarrow \mathfrak{g}.
\end{align}
Next, let $\sigma : S^{2} \to F \seq \mathfrak{g}$ be a smooth map. By construction, for $x \in S^{2}$, $\sigma(x)$ is a canonical element of a parabolic subalgebra of $\mathfrak{g}^{\BC}$. Therefore, $\sigma$ defines a bundle of parabolic subalgebras over $S^{2}$: $\mathfrak{q}^{\sigma} \seq \underline{\mathfrak{g}}^{\BC} = S^{2} \times \mathfrak{g}^{\BC}$. The fiber over $x \in S^{2}$ is the unique parabolic subalgebra of $\mathfrak{g}^{\BC}$ with canonical element $\sigma(x).$ Denote this parabolic subalgebra by $\mathfrak{q}^{\sigma}|_{x}.$ Define $\mathfrak{g}_{k}|_{x} \seq \mathfrak{q}|_{x}$ to be the $\sqrt{-1}k$-eigenspace of $\ad(\sigma(x)).$ We then let $\mathfrak{g}_{k}$ be the full $\sqrt{-1}k$-eigenbundle of $\ad(\sigma)$. 

As before, we define $\theta$ to be the left Maurer-Cartan form of $G$. We also define $\nabla^{\phi}$ to be the pullback of the Levi-Civita connection of $G$ by $\phi$. More precisely, we will define it directly as a connection on the trivial bundle $\underline{\mathfrak{g}}$ by $\nabla^{\phi} = d + \frac{1}{2}\ad(\phi^{*}\theta).$ We then extend this complex linearly to a connection on the complexification $\underline{\mathfrak{g}}^{\BC}$.

\begin{definition}
Let $\phi : S^{2} \to G$ be a smooth map. The map $\sigma$ above is called a flag factor for $\phi$ if both
\begin{itemize}
\item $\phi^{*}\theta^{1,0} \in \mathfrak{g}_{0} \oplus \mathfrak{g}_{1}$
\item $(\nabla^{\phi}\sigma)^{0,1} \in \mathfrak{g}_{1}$
\end{itemize}
Recall that the $\mathfrak{g}_{k}$ are the $\sqrt{-1}k$-eigenbundles of $\ad(\sigma)$, so the first condition does indeed depend on $\sigma$. We call the map $\phi\exp(\pi\sigma)$ the flag transformation of $\phi$ by $\sigma.$ 
\end{definition}
In Theorem \ref{factorization} we will see that flag factors can actually be constructed and that they help us factor our harmonic map into simpler pieces. When the flag manifold is actually a Hermitian symmetric space, one should think of the second condition above as an equivalent condition for the holomorphy of $\mathfrak{q}^{\sigma}$ (see \cite{BR90} page 92). 

Note that, for example, the notation $\phi^{*}\theta^{1,0}$ denotes extending $\phi^{*}\theta$ complex linearly and then computing the $(1,0)$-part. It is useful to view these real maps as complex because we have a nice decomposition of $\underline{\mathfrak{g}}^{\BC}$ in terms of the $\mathfrak{g}_{k}.$ Namely, $\underline{\mathfrak{g}}^{\BC} = \sum_{k}\mathfrak{g}_{k}.$

Note also that flag transformations preserve harmonicity. Furthermore, a harmonic map and its flag transformation differ in energy by an integer multiple of $16\pi n_{G}$, where $n_{G}$ is the reciprocal of the norm squared of the highest root in $G.$ Table \ref{table:1}, from \cite{BR90}, shows the values of $n_{G}$ for the Killing form. Note that $n_{G}$ is not to be confused with a standalone $n$. Any use of $n_{G}$ will always be accompanied by the appropriate subscript.
\begin{table}
\centering
\addvbuffer[0pt 10pt]{\begin{tabular}{|W{c}{2.5cm}|W{c}{1cm}|}
\hline
$G$ & $n_{G}$ \\ 
\hline
$\SU(n)$ & $n$ \\
$\Spin(n)$, $n \geq 5$ & $n - 2$ \\
$\Sp(n)$ & $n + 1$ \\
$E_{6}$ & $12$ \\
$E_{7}$ & $18$ \\
$E_{8}$ & $30$ \\
$F_{4}$ & $9$ \\
$G_{2}$ & $4$ \\
\hline
\end{tabular}}
\caption{Values of $n_{G}$ for the compact simple Lie groups}
\label{table:1}
\end{table}
Now, we make our main restriction on $G$:
\begin{definition}
We say that a compact simple Lie group is of type H if it admits a Hermitian symmetric quotient. This includes all of the classical compact simple Lie groups and just rules out $E_{8}$, $F_{4}$, and $G_{2}.$
\end{definition}
Unless otherwise stated, we will assume $G$ is of type H. Additionally, we will eventually make the assumption that $G$ is not $\Sp(n)$ for $n > 7.$ This is for technical reasons that we will see later. It turns out that when our group is of type H, any harmonic $S^{2}$ admits a factorization by iterated flag transformations. More precisely,
\begin{theorem}[\cite{BR90} Theorem 8.9]\label{factorization}
Let $\phi : S^{2} \to G$ be a nonconstant harmonic map. Let $H$ be Hermitian symmetric quotient of $G.$ There exists a harmonic map $\phi_{0} : S^{2} \to G$ and a flag factor $\sigma : S^{2} \to H \seq \mathfrak{g}$ for $\phi_{0}$ such that $\phi = \phi_{0}\exp(\pi\sigma).$ Additionally, the difference of the energy of $\phi$ and $\phi_{0}$ is bounded below:
\begin{align}
E[\phi] - E[\phi_{0}] \geq 16\pi n_{G}.
\end{align}
Continuing this process, which must terminate eventually because $E[\phi] < \infty$, one may completely factor $\phi$ in terms of flag transformations.
\end{theorem}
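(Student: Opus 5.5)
The plan is to reproduce the Burstall--Rawnsley argument in the language of extended solutions; the statement is quoted from \cite{BR90}, so this is only a reconstruction. Write $A = \tfrac{1}{2}\phi^{*}\theta$ and $\nabla^{\phi} = d + \ad(A)$ as above. Harmonicity of $\phi$ (i.e.\ $d^{*}\phi^{*}\theta = 0$) together with the Maurer--Cartan equation says exactly two things:
\begin{itemize}
\item the $(0,1)$-part of $\nabla^{\phi}$ defines a holomorphic structure on $\underline{\mathfrak{g}}^{\BC}$ over $S^{2}$;
\item $A^{1,0}$ is a holomorphic section of $\operatorname{End}(\underline{\mathfrak{g}}^{\BC}) \otimes K_{S^{2}}$ with respect to it.
\end{itemize}
Equivalently, Uhlenbeck's extended solution $\Phi_{\lambda}$ exists, since $S^{2}$ is simply connected, with $\Phi_{-1} = \phi$. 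Because $S^{2}$ is compact, $\Phi$ can be normalized to be algebraic in $\lambda$; this is Uhlenbeck's finite uniton number.

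\textbf{Step 1: choose the Hermitian symmetric data.} Since $G$ is of type H, there is a cominuscule simple root $\alpha_{i_0}$, i.e.\ one with $n_{\{i_0\}}(\alpha) \in \{-1,0,1\}$ for every root $\alpha$. Its canonical element $\xi$ then has $\ad(\xi)$-eigenvalues in $\{-\sqrt{-1},0,\sqrt{-1}\}$, and its adjoint orbit is $H$. Consequently, for any $\sigma : S^{2} \to H$, $\exp(\pi\sigma)$ acts on $\mathfrak{g}$ as an involution, and $\underline{\mathfrak{g}}^{\BC} = \mathfrak{g}_{-1} \oplus \mathfrak{g}_{0} \oplus \mathfrak{g}_{1}$ has only three pieces.

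\textbf{Step 2: build $\sigma$ from the holomorphic data.} Once $\Phi$ is normalized, consider its top-order Laurent coefficient in $\lambda$, or equivalently the holomorphic filtration generated by iterating $A^{1,0}$. This gives a holomorphic subbundle of $\underline{\mathfrak{g}}^{\BC}$ (or of a representation, as with Uhlenbeck's unitons for $U(n)$). The aim is to show that it is, pointwise, the nilradical $\mathfrak{g}_{1}$ of a parabolic subalgebra conjugate to $\mathfrak{q}_{\{i_0\}}$. Its canonical element is then a smooth map $\sigma : S^{2} \to H$. One then checks directly that:
\begin{itemize}
\item $(\nabla^{\phi}\sigma)^{0,1} \in \mathfrak{g}_{1}$, which is holomorphy of the subbundle;
\item $\phi_{0} := \phi\exp(-\pi\sigma)$ satisfies $\phi_{0}^{*}\theta^{1,0} \in \mathfrak{g}_{0} \oplus \mathfrak{g}_{1}$.
\end{itemize}
Hence $\sigma$ is a flag factor for $\phi_{0}$ and $\phi = \phi_{0}\exp(\pi\sigma)$. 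The inverse of the fact that flag transformations preserve harmonicity (replace $\sigma$ by $-\sigma$ and use $\exp(2\pi\sigma)$ central) then gives that $\phi_{0}$ is harmonic.

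This is the main obstacle. Subbundles produced by holomorphic data only extend across the finitely many points where ranks drop after saturation, and one must then show that the resulting flag really lies in the single orbit $H$. The type H hypothesis is exactly what makes a three-step grading suffice. I would first handle $\SU(n)$ by Grassmannians, following Uhlenbeck's unitons, and then reduce the other type H groups via their cominuscule representations.

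\textbf{Step 3: energy and termination.} Expand $|\phi^{*}\theta|^{2}$ using the grading. The difference $E[\phi] - E[\phi_{0}]$ becomes an integral of $\langle \sigma, \text{curvature}\rangle$, that is, the degree of the holomorphic line bundle associated to $\mathfrak{g}_{1}$ and $\xi$, multiplied by $16\pi n_{G}$. The factor $n_{G}$ appears because it is the reciprocal of the squared length of the highest root, which normalizes the Killing-form pairing. Holomorphy of the subbundle forces this degree to be a nonnegative integer, and nonconstancy of $\phi$ forces it to be positive, giving $E[\phi]-E[\phi_{0}] \geq 16\pi n_{G}$. Since the energy is nonnegative and drops by at least $16\pi n_{G}$ at each stage, iterating on $\phi_{0}$ must stop after finitely many steps, at a constant map.
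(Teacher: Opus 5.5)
The paper does not prove this statement. It is quoted verbatim from \cite{BR90}, so your proposal has to stand on its own, and it does not. Step 2 carries the whole theorem, and it is only stated as an aim. Nothing in your sketch produces a $\sigma$ with values in the \emph{prescribed} orbit $H=\Ad(G)\xi_{i_0}$. The top Laurent coefficient of a normalized extended solution, or the filtration generated by iterating $A^{1,0}$, gives holomorphic subbundles whose rank and type are dictated by $\phi$. Already for $\SU(n)$, the ranks of Uhlenbeck's unitons are determined by $\phi$, not by a Grassmannian chosen in advance. You give no mechanism forcing these subbundles to be the $\pm\sqrt{-1}$-eigenbundles of an element conjugate to $\xi_{i_0}$; ``reduce via cominuscule representations'' names a strategy, not an argument. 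This is not a side issue: the freedom to prescribe $H$ is exactly what the paper uses later when it picks $\xi_i$ with $|\xi_i|^2<8n_G$.

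The conditions you propose to check are also not the right ones.
\begin{itemize}
\item Your two bullets are incompatible in general. For $\phi_0=\phi\exp(-\pi\sigma)$ one computes $\phi_0^*\theta=(\phi^*\theta)_0-(\phi^*\theta)_1-(\phi^*\theta)_{-1}-2\sigma^*\beta$ and $(\nabla^\phi\sigma)_{\pm1}=\mp\sqrt{-1}\bigl((\sigma^*\beta)_{\pm1}+\tfrac12(\phi^*\theta)_{\pm1}\bigr)$. Hence $(\phi_0^*\theta)_{-1}=2\sqrt{-1}(\nabla^\phi\sigma)_{-1}$. Since $\nabla^\phi\sigma$ is real, your second bullet is equivalent to $(\nabla^\phi\sigma)^{0,1}\in\mathfrak{g}_{-1}$. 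Combined with your first bullet, this forces $\nabla^\phi\sigma=0$, i.e.\ $\phi_0^*\theta\in\mathfrak{g}_0$. That is a degenerate case (it happens when $\phi_0$ is constant), not the general one.
\item The correct target is that $-\sigma$ be a flag factor for $\phi$. The definition of a flag factor for $\phi_0$ involves $\nabla^{\phi_0}$, not $\nabla^{\phi}$. Since $(\nabla^{\phi_0}\sigma)_{-1}=-\tfrac{\sqrt{-1}}{2}(\phi^*\theta)_{-1}$, the condition $(\nabla^{\phi_0}\sigma)^{0,1}\in\mathfrak{g}_1$ is equivalent to $(\phi^*\theta)^{1,0}\in\mathfrak{g}_0\oplus\mathfrak{g}_{-1}$. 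So you need two things:
\begin{itemize}
\item $\bar\partial^{\phi}$-holomorphy of $\mathfrak{g}_{-1}$ and $\mathfrak{g}_{-1}\oplus\mathfrak{g}_0$;
\item the Uhlenbeck-type invariance condition $(\phi^*\theta)^{1,0}\in\mathfrak{g}_0\oplus\mathfrak{g}_{-1}$.
\end{itemize}
The second half is missing from your plan and has to be built into the construction of the subbundle. Once both hold, your use of $-\sigma$ to get harmonicity of $\phi_0$ is fine, as is the termination argument.
\item Positivity in Step 3 cannot come from nonconstancy of $\phi$. Suppose $\phi$ is a nonconstant harmonic map into the centralizer $K$ of $\xi$; such maps exist, e.g.\ for $\SU(n)$ with $n\geq 3$. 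Then $\sigma\equiv\xi$ is a flag factor for $\phi\exp(-\pi\xi)$ and the energy does not change. The lower bound $16\pi n_G$ must come from how $\sigma$ is chosen. Your identification of the energy drop with $16\pi n_G$ times a nonnegative degree is asserted rather than computed.
\end{itemize}
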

In other words, $\phi$ can be constructed as a finite number of flag transformations of a constant map, and furthermore, with each flag transformation, the energy must go up by a positive integer multiple of $16\pi n_{G}.$ From this theorem, it seems quite possible that $\sigma$ will contribute to the index of the cone map of $\phi$. We will soon see that this is indeed the case.
\bigskip
\section{Cone Stability Inequality}
In this section, we reduce the full stability inequality for a cone map to a simple inequality on the underlying sphere map. Let us first be more precise about our notation.
\begin{definition}
Let $N$ be a compact manifold, and let $\phi : S^{n} \to N$ be a smooth map where $n \geq 2$. The cone map associated to $\phi$ is the map $\wt\phi : \BR^{n + 1} \to N : x \mapsto \phi(\frac{x}{|x|}).$
\end{definition}
\begin{definition}
We say that a smooth map $\phi : S^{n} \to N$ is cone stable if its cone map is a stable harmonic map.
\end{definition}
\begin{example}
It is well-known that harmonicity of $\phi$ is equivalent to harmonicity of $\wt\phi$ in the weak sense. However, we shall see below that if $\phi$ is unstable, it is not at all obvious whether it is cone stable. In fact, the cone stability inequality \eqref{conestabineq} below is the usual stability inequality for maps from the sphere with an additional positive term. Therefore, if we plug in an unstable vector field for the sphere map, \eqref{conestabineq} may still be satisfied.

A simple example of a harmonic map that is unstable but cone stable is the identity map $\phi : S^{n} \to S^{n}$ for $n \geq 3.$ Indeed, $\phi$ is an unstable map \cite{Smi75}, but its cone map is in fact minimizing \cite{Lin87}.
\end{example}
\begin{proposition}
Suppose $S^{n}$ is equipped with the round metric of radius one. Let $(N,h)$ be any compact manifold, and assume that $\phi : S^{n} \to N$ is cone stable with associated cone map $\wt\phi.$ Then, 
\begin{align}
\int_{S^{n}}\dfrac{(n - 1)^{2}}{4}|X|^{2} + |\nabla^{\phi}X|^{2} - h(R^{N}(X,\nabla_{i}\phi)\nabla_{i}\phi,X) \operatorname{vol}_{S^{n}}\geq 0 \label{conestabineq}
\end{align}
for all $X \in \Gamma(S^{n}, \phi^{*}TN)$.
\end{proposition}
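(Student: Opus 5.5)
We test the stability of the cone map $\wt\phi$ against separable variations. Write $x = r\omega$ with $r > 0$ and $\omega \in S^{n}$, so the Euclidean metric on $\BR^{n+1}\setminus\{0\}$ is $dr^{2} + r^{2}g_{S^{n}}$ and the volume form is $r^{n}\,dr\,\vol_{S^{n}}$. Given $X \in \Gamma(S^{n},\phi^{*}TN)$ and a smooth $f$ compactly supported in $(0,\infty)$, set $\wt X(r\omega) = f(r)X(\omega)$. This is a compactly supported section of $\wt\phi^{*}TN$ that avoids the origin, so the stability of $\wt\phi$ gives $I^{\wt\phi}(\wt X) \geq 0$ by \eqref{stabineq}. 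The plan is to evaluate this Hessian in polar coordinates, separate the radial and spherical integrals, and then optimize over $f$.

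\textbf{Separating the Hessian.} Since $\wt\phi$ is constant along rays, $\nabla^{\wt\phi}_{\p_r}\wt X = f'(r)X$. For the tangential directions, orthonormal frames at radius $r$ are $r^{-1}e_i$, so
\begin{align*}
|\nabla^{\wt\phi}\wt X|^{2} = f'(r)^{2}|X|^{2} + r^{-2}f(r)^{2}|\nabla^{\phi}X|^{2}.
\end{align*}
Likewise $|\nabla\wt\phi|^{2}$ picks up the factor $r^{-2}$, and the curvature term becomes $r^{-2}f^{2}\,h(R^{N}(X,\nabla_i\phi)\nabla_i\phi,X)$. Integrating gives
\begin{align*}
0 \leq \left(\int_{0}^{\infty}f'^{2}r^{n}\,dr\right)A + \left(\int_{0}^{\infty}f^{2}r^{n-2}\,dr\right)B,
\end{align*}
where $A = \int_{S^{n}}|X|^{2}$ and $B = \int_{S^{n}}|\nabla^{\phi}X|^{2} - h(R^{N}(X,\nabla_i\phi)\nabla_i\phi,X)$.

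\textbf{The Hardy step.} If $B \geq 0$ there is nothing to prove, since $A \geq 0$ makes \eqref{conestabineq} hold trivially. So assume $B < 0$. It then suffices to show that
\begin{align*}
\inf_{f}\frac{\int f'^{2}r^{n}\,dr}{\int f^{2}r^{n-2}\,dr} = \frac{(n-1)^{2}}{4},
\end{align*}
because dividing the displayed inequality by $\int f^{2}r^{n-2}$ and taking the infimum yields $\frac{(n-1)^{2}}{4}A + B \geq 0$, which is exactly \eqref{conestabineq}. This identity is the weighted one-dimensional Hardy inequality. The lower bound is the standard Hardy inequality, obtained by integrating by parts against $r^{n-1}$. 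Sharpness is the main point to verify. For it I would use the substitution $r = e^{t}$ and $f = r^{-(n-1)/2}g$, under which the quotient becomes
\begin{align*}
\frac{\int (g' - \tfrac{n-1}{2}g)^{2}\,dt}{\int g^{2}\,dt} = \frac{(n-1)^{2}}{4} + \frac{\int g'^{2}\,dt}{\int g^{2}\,dt},
\end{align*}
since the cross term $\int gg'\,dt$ vanishes. Taking $g$ to be a long plateau cutoff makes the last ratio arbitrarily small.

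The main obstacle is regularity at the origin. Our test fields vanish near $0$, so they are admissible compactly supported variations and no capacity argument is needed. Only smoothness of $\phi$ is used, which the proposition assumes.
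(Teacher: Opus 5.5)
Your proposal is correct and follows the paper's argument: you test the cone map's stability against separable fields $\psi(r)X(\omega)$ supported away from the origin, separate the radial and spherical integrals in polar coordinates, divide by $\int_{0}^{\infty}\psi^{2}r^{n-2}\,dr$, and minimize the radial quotient. The only difference is that you compute the sharp constant $\frac{(n-1)^{2}}{4}$ yourself via the substitution $r = e^{t}$, $f = r^{-(n-1)/2}g$, which gives both the lower bound and its sharpness at once, whereas the paper cites \cite{SU84} Lemma 1.3.
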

\begin{proof}
To begin, let us recall the original stability inequality. We know that $\wt\phi$ is stable, so
\begin{align}
I^{\wt\phi}(\wt X) = \int_{\BR^{n + 1}}|\nabla^{\wt\phi}\wt X|^{2} - h(R^{N}(\wt X, \nabla_{i}\wt\phi)\nabla_{i}\wt\phi, \wt X)dx \geq 0
\end{align}
for every compactly supported $\wt X \in \Gamma(\BR^{n + 1}, \wt\phi^{*}TN)$. Now, we choose a nice section $\wt X = \psi(r)X(\omega)$, where $\psi \in C_{c}^{\infty}((0,\infty);\BR)$ and $X \in \Gamma(S^{n},\phi^{*}TN).$ This is certainly a section of $\wt\phi^{*}TN$ because at $r\omega \in \BR^{n + 1}$, 
\begin{align}
\wt X(r\omega) = \psi(r)X(\omega) \in T_{\phi(\omega)}N = T_{\wt\phi(r\omega)}N 
\end{align}
and $\psi$ is supported away from zero. Now, choose the local orthonormal frame near $x \in \BR^{n + 1}$ given by the radial vector field $\frac{\p}{\p r}$ and the vector fields $\{e_{j}\}_{j = 1}^{n}$ along the sphere. We will use such a frame for all of our computations below. We may now plug into the stability inequality:
\begin{align}
I^{\wt\phi}(\wt X) &= \int_{\BR^{n + 1}}|\nabla^{\wt\phi}\wt X|^{2} - h(R^{N}(\wt X, \nabla_{\frac{\p}{\p r}}\wt\phi)\nabla_{\frac{\p}{\p r}}\wt\phi, \wt X) - h(R^{N}(\wt X, \nabla_{i}\wt\phi)\nabla_{i}\wt\phi, \wt X)dx \\ \non \\
&= \int_{\BR^{n + 1}}|\nabla^{\wt\phi}\wt X|^{2} - h(R^{N}(\wt X, \nabla_{i}\wt\phi)\nabla_{i}\wt\phi, \wt X)dx \\ \non \\
&= \int_{0}^{\infty}\int_{S^{n}}|\nabla^{\wt \phi}(\psi(r)X(\omega))|^{2}r^{n} - h(R^{N}(\psi(r) X(\omega), \nabla_{i}\wt\phi)\nabla_{i}\wt\phi, \psi(r)X(\omega))r^{n}\vol_{S^{n}}(\omega)dr \\ \non \\
&= \int_{0}^{\infty}\int_{S^{n}}|\nabla^{\wt\phi}_{\frac{\p}{\p r}}(\psi(r)X(\omega))|^{2}r^{n} + |\nabla^{\wt\phi}_{j}(\psi(r)X(\omega))|^{2}r^{n} \\ \non
&\qquad \qquad\qquad- \psi(r)^{2}h(R^{N}(X(\omega),\nabla_{i}\wt\phi)\nabla_{i}\wt\phi, X(\omega))r^{n}\vol_{S^{n}}(\omega)dr \\ \non \\
&= \int_{0}^{\infty}\int_{S^{n}}\psi'(r)^{2}|X(\omega)|^{2}r^{n} + \psi(r)^{2}|\nabla^{\wt\phi}_{j}X(\omega)|^{2}r^{n}\\ \non
&\qquad \qquad\qquad- \psi(r)^{2}h(R^{N}(X(\omega),\nabla_{i}\wt\phi)\nabla_{i}\wt\phi, X(\omega))r^{n}\vol_{S^{n}}(\omega)dr \\ \non \\
&= \int_{0}^{\infty}\int_{S^{n}}\psi'(r)^{2}|X(\omega)|^{2}r^{n} + \psi(r)^{2}|\nabla^{\phi}_{j}X(\omega)|^{2}r^{n - 2}\\ \non
&\qquad \qquad\qquad- \psi(r)^{2}h(R^{N}(X(\omega),\nabla_{i}\phi)\nabla_{i}\phi, X(\omega))r^{n - 2}\vol_{S^{n}}(\omega)dr \\ \non \\
&= \int_{0}^{\infty}\psi'(r)^{2}r^{n}dr\int_{S^{2}}|X(\omega)|^{2}\vol_{S^{n}}(\omega) \\ \non
&\qquad\qquad\qquad+ \int_{0}^{\infty}\psi(r)^{2}r^{n - 2}dr\int_{S^{n}}|\nabla^{\phi}X|^{2} - h(R^{N}(X,\nabla_{i}\phi)\nabla_{i}\phi,X)\vol_{S^{n}} \geq 0
\end{align}
We have now isolated the $\omega$ and $r$ factors. The next natural step is to realize $\psi \not\equiv 0$ so that $\int_{0}^{\infty}\psi(r)^{2}r^{n - 2}dr > 0$. We then divide by this integral:
\begin{align}
\dfrac{\int_{0}^{\infty}\psi'(r)^{2}r^{n}dr}{\int_{0}^{\infty}\psi(r)^{2}r^{n - 2}dr}\int_{S^{n}}|X|^{2}\vol_{S^{n}} + \int_{S^{n}}|\nabla^{\phi}X|^{2} - h(R^{N}(X,\nabla_{i}\phi)\nabla_{i}\phi,X)\vol_{S^{n}} \geq 0
\end{align}
Taking the infimum over $\psi \in C_{c}^{\infty}((0,\infty);\BR)$ and applying \cite{SU84} Lemma 1.3 to compute this infimum, we achieve
\begin{align}
\int_{S^{n}}\dfrac{(n - 1)^{2}}{4}|X|^{2} + |\nabla^{\phi}X|^{2} - h(R^{N}(X,\nabla_{i}\phi)\nabla_{i}\phi,X)\vol_{S^{n}} \geq 0
\end{align}
which is the desired result.
\end{proof}
\bigskip
\section{Computing The Energy}
Let $G$ be a compact Lie group of type H, and let $\phi : S^{2} \to G$ be a harmonic map. In all of the calculations in this paper we will assume that $S^{2}$ carries the round metric of radius one and that the bi-invariant metric of $G$ has the Killing normalization. We now proceed to construct our variation and compute its energy. As mentioned before, by Theorem \ref{factorization} (\cite{BR90} Theorem 8.9), once we choose a Hermitian symmetric space $H$, we know that $\phi$ factors as a flag transformation of a lower energy harmonic map. Calling the lower energy harmonic map $\phi_{0}$ as in Theorem \ref{factorization}, we can write this formally as
\begin{align}
\phi = \phi_{0}\exp(\pi\sigma) \label{splitoffphi0}
\end{align}
for some flag factor $\sigma : S^{2} \to H \seq \mathfrak{g}$ for $\phi_{0}$. By \cite{BR90}, we can actually find a flag factor $\sigma$ satisfying the above factorization \eqref{splitoffphi0} for any Hermitian symmetric quotient $H$ of $G$. This will be crucial in the next section when we are estimating the norm of $\sigma.$ 

We also know that $E[\phi] - E[\phi_{0}] \geq 16\pi n_{G}$. Thus, it seems possible that $\sigma$ will be an energy decreasing variation of both $\phi$ and the cone map $\wt\phi.$ Note that $\sigma$ itself is not a vector field along the image of $\phi$ because it has image in the Lie algebra. We must multiply $\sigma$ on the left by $\phi$ to get a section of $\phi^{*}TG$. However, because the bundle $\phi^{*}TG = S^{2} \times \mathfrak{g}$ is trivial, it is very often convenient to view the variational field as just $\sigma.$ We will adopt this approach here. 

Now that we have our variational field, let us turn it into an actual variation of $\phi.$ This is simply $\phi_{t} = \phi_{0}\exp(t\sigma).$ At $t = 0$, we get $\phi_{0}$, and at $t = \pi$, we get $\phi.$

Our Lie group has a bi-invariant metric, so we may simplify the expression for the energy to the following expression for maps $\phi : M \to G$
\begin{align}
E[\phi] = \dfrac{1}{2}\int_{M}|\phi^{*}\theta|^{2}\vol_{M}
\end{align}
where $\theta$ is the left Maurer-Cartan form of $G.$ Therefore, the natural first step in computing the energy of $\phi_{t}$ is to compute $\phi_{t}^{*}\theta.$ As previously mentioned, it is more difficult to work directly with the real Lie algebra $\mathfrak{g}$, so we will extend $d\phi_{t}$ to get a map $d\phi_{t}^{\BC} : T^{\BC}S^{2} \to \mathfrak{g}^{\BC}$. This map will also be denoted $d\phi_{t}$ to keep our notation simple. We will use the Hermitian inner product induced by the Killing form of $\mathfrak{g}^{\BC}$ and Cartan involution of $\mathfrak{g}^{\BC}$. This takes the same values as minus the Killing form of $\mathfrak{g}$ on elements of $\mathfrak{g}$.

Recall also that for $x \in S^{2}$, $\ad(\sigma(x))$ is diagonalizable over $\BC$ and that the eigenspaces of $\ad(\sigma(x))$ in $\mathfrak{g}^{\BC}$ are orthogonal with respect to our choice of inner product. Also, because our flag factor is mapping into a Hermitian symmetric space, we know the only eigenvalues of $\ad(\sigma(x))$ are zero and $\pm\sqrt{-1}$ (see for example Chapter 4 of \cite{BR90}). If $v \in \underline{\mathfrak{g}}^{\BC}$, $v_{k}$ will denote the component of $v$ in $\mathfrak{g}_{k}$, the $\sqrt{-1}k$-eigenbundle of $\ad(\sigma).$ 

Below, we will reference the Maurer-Cartan form $\beta$ of $H$. For the precise definition of this form, see \cite{BR90} page 6. Briefly, given a homogeneous space $G/K$ with reductive factor $\mathfrak{m}$, we can identify $T(G/K)$ with a subbundle of $G/K \times \mathfrak{g}$ via the following composition of maps: 
\begin{align}
T(G/K) = G \times_{\Ad|_{K}}\mathfrak{g}/\mathfrak{k} = G \times_{\Ad|_{K}} \mathfrak{m} \hookrightarrow G/K \times \mathfrak{g}.
\end{align}
In other words, given an element $X_{x} \in T_{x}(G/K)$, we define the Lie algebra valued one form $\beta_{x}(X_{x}) \in \mathfrak{g}$ to be the $\mathfrak{g}$ component of the image of $X_{x}$ under the above composition of maps.

In the computation below, we will also use the following facts about flag factors derived in \cite{BR90} on page 92. Let $\beta$ be the Maurer-Cartan form of $H$. Then,
\begin{align}
d\sigma &= [\sigma^{*}\beta,\sigma] \label{dflag}\\
(\sigma^{*}\beta)^{1,0}_{1} = -\frac{1}{2}&(\phi_{0}^{*}\theta)^{1,0}_{1} \text{~and~} (\sigma^{*}\beta)^{0,1}_{-1} = -\frac{1}{2}(\phi_{0}^{*}\theta)^{0,1}_{-1} \label{flagtophi} \\
\sigma^{*}\beta \text{~has~} &\text{image in~} \mathfrak{g}_{-1} \oplus \mathfrak{g}_{1} \label{imflagbeta}
\end{align}

Let us now compute the energy $E[\phi_{t}]$ for any $t \in \BR$:
\begin{proposition}
Let $\phi_{t}: S^{2} \to G$ be as above. Then the energy is given by
\begin{align}
E[\phi_{t}] = E[\phi_{0}] + (1 - \cos(t))\int_{S^{2}}|\sigma^{*}\beta|^{2} - \dfrac{1}{2}|(\phi_{0}^{*}\theta)_{1}|^{2} - \dfrac{1}{2}|(\phi_{0}^{*}\theta)_{-1}|^{2}\vol_{S^{2}}
\end{align}
where $\beta$ is the Maurer-Cartan form of $H.$
\end{proposition}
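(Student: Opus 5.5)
The plan is to compute $\phi_{t}^{*}\theta$ explicitly, decompose it along the eigenbundles $\mathfrak{g}_{0},\mathfrak{g}_{\pm 1}$ of $\ad(\sigma)$ and into $(1,0)$ and $(0,1)$ parts, and then read off $|\phi_{t}^{*}\theta|^{2}$ pointwise. Throughout write $A = \phi_{0}^{*}\theta$ and $B = \sigma^{*}\beta$.

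\emph{Step 1: a formula for $\phi_{t}^{*}\theta$.} Since $\phi_{t} = \phi_{0}\exp(t\sigma)$, the product rule for the Maurer--Cartan form gives
\begin{align*}
\phi_{t}^{*}\theta = \Ad(\exp(-t\sigma))A + \exp(-t\sigma)\,d\exp(t\sigma).
\end{align*}
For the second term I would use the standard formula for the differential of the exponential,
\begin{align*}
\exp(-X)\,d\exp(X) = \frac{1 - e^{-\ad X}}{\ad X}(dX),
\end{align*}
with $X = t\sigma$. On $\mathfrak{g}_{k}$ the operator $\ad(t\sigma)$ acts as $\sqrt{-1}kt$. By \eqref{dflag}, $dX = t[B,\sigma] = -t\ad(\sigma)B$, and by \eqref{imflagbeta} this lies in $\mathfrak{g}_{1}\oplus\mathfrak{g}_{-1}$. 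Consequently $\phi_{t}^{*}\theta$ has components
\begin{align*}
(\phi_{t}^{*}\theta)_{0} = A_{0}, \qquad (\phi_{t}^{*}\theta)_{\pm1} = e^{\mp\sqrt{-1}t}A_{\pm1} - (1 - e^{\mp\sqrt{-1}t})B_{\pm1}.
\end{align*}
The main point requiring care is the justification of the $d\exp$ formula when $\sigma$ varies pointwise. This should be routine, since at each point $\ad(\sigma)$ is diagonalizable with eigenvalues $0,\pm\sqrt{-1}$.

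\emph{Step 2: refine by type.} Because the $\mathfrak{g}_{k}$ are mutually orthogonal, and the $(1,0)$ and $(0,1)$ parts are orthogonal as well, the energy density splits into a sum of squared norms. I would then use three inputs.
\begin{itemize}
\item The flag factor condition $A^{1,0}\in\mathfrak{g}_{0}\oplus\mathfrak{g}_{1}$ gives $A^{1,0}_{-1}=0$. By reality of $A$ (conjugation swaps $\mathfrak{g}_{k}$ with $\mathfrak{g}_{-k}$ and $(1,0)$ with $(0,1)$), this also gives $A^{0,1}_{1}=0$.
\item The relations \eqref{flagtophi} say $B^{1,0}_{1} = -\tfrac12 A^{1,0}_{1}$ and $B^{0,1}_{-1} = -\tfrac12 A^{0,1}_{-1}$.
\end{itemize}
Substituting, the $(1,0)$ part of $(\phi_{t}^{*}\theta)_{1}$ becomes $\tfrac12(1+e^{-\sqrt{-1}t})A^{1,0}_{1}$, whose squared norm is $\tfrac12(1+\cos t)|A^{1,0}_{1}|^{2}$. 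The $(0,1)$ part becomes $-(1-e^{-\sqrt{-1}t})B^{0,1}_{1}$, whose squared norm is $2(1-\cos t)|B^{0,1}_{1}|^{2}$. The $\mathfrak{g}_{-1}$ component is handled symmetrically.

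\emph{Step 3: assemble.} Subtracting the $t=0$ density and multiplying by $\tfrac12$ gives the pointwise change
\begin{align*}
(1-\cos t)\Bigl(|B^{0,1}_{1}|^{2} + |B^{1,0}_{-1}|^{2} - \tfrac14|A^{1,0}_{1}|^{2} - \tfrac14|A^{0,1}_{-1}|^{2}\Bigr).
\end{align*}
Finally I would rewrite this using
\begin{align*}
|B|^{2} = \tfrac14|A_{1}|^{2} + \tfrac14|A_{-1}|^{2} + |B^{0,1}_{1}|^{2} + |B^{1,0}_{-1}|^{2}.
\end{align*}
This identity follows from \eqref{imflagbeta} and \eqref{flagtophi}, together with $A_{1} = A^{1,0}_{1}$ and $A_{-1} = A^{0,1}_{-1}$ from Step 2. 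It converts the bracket into $|B|^{2} - \tfrac12|A_{1}|^{2} - \tfrac12|A_{-1}|^{2}$, and integrating over $S^{2}$ yields the claimed formula.

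The step I expect to be most delicate is keeping the normalization of norms of complexified forms consistent. The relation between $|\cdot|^{2}$ of a real form and the sum of its $(1,0)$ and $(0,1)$ parts must be the same convention used in $E[\phi]=\tfrac12\int|\phi^{*}\theta|^{2}$. I would fix this once, with the Hermitian inner product described above, and check it against $t=0$.
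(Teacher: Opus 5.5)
Your proposal is correct and follows essentially the same route as the paper. Both use the same formula for $\phi_{t}^{*}\theta$ via $\Ad(\exp(-t\sigma))$ and the $d\exp$ series, the same orthogonal eigenbundle splitting, and the same inputs: the flag factor condition with its conjugate $(\phi_{0}^{*}\theta)^{0,1}_{1}=0$, \eqref{flagtophi}, and \eqref{imflagbeta}. The only difference is bookkeeping: you split each $\mathfrak{g}_{\pm1}$-component by type before squaring, whereas the paper expands the square, keeps $(2-2\cos t)|\sigma^{*}\beta|^{2}$ intact, and substitutes \eqref{flagtophi} into the cross terms $\langle(\phi_{0}^{*}\theta)_{\pm1},(\sigma^{*}\beta)_{\pm1}\rangle$.
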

\begin{proof} 
To begin, we need to compute $\phi_{t}^{*}\theta$:
\begin{align}
\phi_{t}^{*}\theta &= \phi_{t}^{-1}d\phi_{t} \\ \non \\
&= \exp(-t\sigma)\phi_{0}^{-1}d(\phi_{0}\exp(t\sigma)) \\ \non \\
&= \exp(-t\sigma)\phi_{0}^{-1}d(\phi_{0})\exp(t\sigma) + \exp(-t\sigma)d(\exp(t\sigma)) \\ \non \\
&= \Ad(\exp(-t\sigma))\phi_{0}^{*}\theta + \exp(-t\sigma)d(\exp(t\sigma)) \label{fullvariationpullback}
\end{align}
Let us compute the first term of \eqref{fullvariationpullback} using the definition of a flag factor: 
\begin{align}
\Ad(\exp(-t\sigma))\phi_{0}^{*}\theta &= \exp(-t\ad(\sigma))\phi_{0}^{*}\theta \\ \non \\
&= \sum_{k \geq 0}\dfrac{(-t)^{k}}{k!}\ad(\sigma)^{k}\phi_{0}^{*}\theta \\ \non \\
&= \sum_{k \geq 0}\dfrac{(-t)^{k}}{k!}\ad(\sigma)^{k}((\phi_{0}^{*}\theta)_{1} + (\phi_{0}^{*}\theta)_{0} + (\phi_{0}^{*}\theta)_{-1}) \\ \non \\
&= (\phi_{0}^{*}\theta)_{0} + \sum_{k \geq 0}\dfrac{(-t)^{k}}{k!}\left (\sqrt{-1}^{k}(\phi_{0}^{*}\theta)_{1} + (-\sqrt{-1})^{k}(\phi_{0}^{*}\theta)_{-1}\right ) \\ \non \\
&= (\phi_{0}^{*}\theta)_{0} + \sum_{k \geq 0}\dfrac{(-t\sqrt{-1})^{k}}{k!}(\phi^{*}_{0}\theta)_{1} + \sum_{k \geq 0}\dfrac{(t\sqrt{-1})^{k}}{k!}(\phi_{0}^{*}\theta)_{-1} \\ \non \\
&= (\phi_{0}^{*}\theta)_{0} + \exp(-t\sqrt{-1})(\phi_{0}^{*}\theta)_{1} + \exp(t\sqrt{-1})(\phi^{*}_{0}\theta)_{-1}
\end{align}
With this done, we will now compute the second term of \eqref{fullvariationpullback}. We will use a standard formula for the derivative of the exponential map. For an explanation of this fact, see Theorem 5.4 of \cite{Hal15}. 
\begin{align}
\exp(-t\sigma)d(\exp(t\sigma)) &= \exp(-t\sigma)d(\exp)_{t\sigma}d(t\sigma) \\ \non \\
&= \exp(-t\sigma)\exp(t\sigma)\sum_{k \geq 0}\dfrac{(-1)^{k}}{(k + 1)!}\ad(t\sigma)^{k}d(t\sigma) \label{dexp} \\ \non \\
&= \sum_{k \geq 0}\dfrac{(-1)^{k}t^{k + 1}}{(k + 1)!}\ad(\sigma)^{k}d\sigma
\end{align}
This can be simplified using \eqref{dflag}:
\begin{align}
\sum_{k \geq 0}\dfrac{(-1)^{k}t^{k + 1}}{(k + 1)!}\ad(\sigma)^{k}d\sigma &= \sum_{k \geq 0}\dfrac{(-1)^{k}t^{k + 1}}{(k + 1)!}\ad(\sigma)^{k}[\sigma^{*}\beta,\sigma] \label{differentialofsigma} \\ \non \\
&= \sum_{k \geq 0}\dfrac{(-t)^{k + 1}}{(k + 1)!}\ad(\sigma)^{k + 1}(\sigma^{*}\beta)
\end{align}
We then apply \eqref{imflagbeta}:
\begin{align}
\sum_{k \geq 0}\dfrac{(-t)^{k + 1}}{(k + 1)!}\ad(\sigma)^{k + 1}(\sigma^{*}\beta) &= \sum_{k \geq 0}\dfrac{(-t)^{k + 1}}{(k + 1)!}\ad(\sigma)^{k + 1}((\sigma^{*}\beta)_{1} + (\sigma^{*}\beta)_{-1}) \label{splittingbeta} \\ \non \\
&= \sum_{k \geq 0}\dfrac{(-t\sqrt{-1})^{k + 1}}{(k + 1)!}(\sigma^{*}\beta)_{1} + \sum_{k \geq 0}\dfrac{(t\sqrt{-1})^{k + 1}}{(k + 1)!}(\sigma^{*}\beta)_{-1} \\ \non \\
&= (\exp(-t\sqrt{-1}) - 1)(\sigma^{*}\beta)_{1} + (\exp(t\sqrt{-1}) - 1)(\sigma^{*}\beta)_{-1}
\end{align}
Combining the above lines, we observe:
\begin{align}
\exp(-t\sigma)d(\exp(t\sigma)) &= (\exp(-t\sqrt{-1}) - 1)(\sigma^{*}\beta)_{1} + (\exp(t\sqrt{-1}) - 1)(\sigma^{*}\beta)_{-1}
\end{align}
We will now compute the norm of \eqref{fullvariationpullback}:
\begin{align}
|\phi_{t}^{*}\theta|^{2} &= |\Ad(\exp(-t\sigma))\phi_{0}^{*}\theta + \exp(-t\sigma)d(\exp(t\sigma))|^{2} \\ \non \\
&= |(\phi_{0}^{*}\theta)_{0} + \exp(-t\sqrt{-1})(\phi_{0}^{*}\theta)_{1} + \exp(t\sqrt{-1})(\phi^{*}_{0}\theta)_{-1} \\ \non
&\qquad\qquad+ (\exp(-t\sqrt{-1}) - 1)(\sigma^{*}\beta)_{1} + (\exp(t\sqrt{-1}) - 1)(\sigma^{*}\beta)_{-1}|^{2} \\ \non \\
&= |(\phi_{0}^{*}\theta)_{0}|^{2} + |\exp(-t\sqrt{-1})(\phi_{0}^{*}\theta)_{1}|^{2} + |\exp(t\sqrt{-1})(\phi_{0}^{*}\theta)_{-1}|^{2} \\ \non
&\qquad\qquad+|(\exp(-t\sqrt{-1}) - 1)(\sigma^{*}\beta)_{1}|^{2} + |(\exp(t\sqrt{-1}) - 1)(\sigma^{*}\beta)_{-1}|^{2} \\ \non
&\qquad\qquad + (1 - \exp(-t\sqrt{-1}))\langle (\phi_{0}^{*}\theta)_{1}, (\sigma^{*}\beta)_{1} \rangle + (1 - \exp(t\sqrt{-1}))\langle (\sigma^{*}\beta)_{1}, (\phi_{0}^{*}\theta)_{1} \rangle \\ \non
&\qquad\qquad+(1 - \exp(t\sqrt{-1}))\langle (\phi_{0}^{*}\theta)_{-1}, (\sigma^{*}\beta)_{-1} \rangle + (1 - \exp(-t\sqrt{-1}))\langle (\sigma^{*}\beta)_{-1}, (\phi_{0}^{*}\theta)_{-1} \rangle
\end{align}
We then apply the definition of a flag factor:
\begin{align}
|\phi_{t}^{*}\theta|^{2} &= \label{splitcomponentsofphistartheta}|(\phi_{0}^{*}\theta)_{0}|^{2} + |(\phi_{0}^{*}\theta)_{1}|^{2} + |(\phi_{0}^{*}\theta)_{-1}|^{2} + (2 - 2\cos(t))(|(\sigma^{*}\beta)_{1}|^{2} + |(\sigma^{*}\beta)_{-1}|^{2}) \\ \non 
&\qquad\qquad+ (1 - \exp(-t\sqrt{-1}))\langle(\phi_{0}^{*}\theta)_{1}^{1,0}, (\sigma^{*}\beta)_{1}^{1,0} \rangle|dz|^{2} + (1 - \exp(t\sqrt{-1}))\langle (\sigma^{*}\beta)_{1}^{1,0}, (\phi_{0}^{*}\theta)_{1}^{1,0}\rangle|dz|^{2} \\ \non
&\qquad\qquad +(1 - \exp(t\sqrt{-1}))\langle (\phi_{0}^{*}\theta)_{-1}^{0,1},(\sigma^{*}\beta)^{0,1}_{-1}\rangle|d\bar{z}|^{2} + (1 - \exp(-t\sqrt{-1}))\langle (\sigma^{*}\beta)^{0,1}_{-1}, (\phi_{0}^{*}\theta)_{-1}^{0,1}\rangle|d\bar{z}|^{2} 
\end{align}
We continue on via an application of \eqref{flagtophi}:
\begin{align}
|\phi_{t}^{*}\theta|^{2} &= \label{subinthehalf}|\phi_{0}^{*}\theta|^{2} + (2 - 2\cos(t))|\sigma^{*}\beta|^{2} \\ \non
&\qquad + (1 - \exp(-t\sqrt{-1}))\left \langle(\phi_{0}^{*}\theta)_{1}^{1,0}, -\dfrac{1}{2}(\phi_{0}^{*}\theta)_{1}^{1,0}\right \rangle|dz|^{2} + (1 - \exp(t\sqrt{-1}))\left \langle -\dfrac{1}{2}(\phi_{0}^{*}\theta)_{1}^{1,0}, (\phi_{0}^{*}\theta)_{1}^{1,0}\right \rangle|dz|^{2} \\ \non
&\qquad +(1 - \exp(t\sqrt{-1}))\left \langle (\phi_{0}^{*}\theta)_{-1}^{0,1},-\dfrac{1}{2}(\phi_{0}^{*}\theta)^{0,1}_{-1}\right \rangle|d\bar{z}|^{2} + (1 - \exp(-t\sqrt{-1}))\left \langle -\dfrac{1}{2}(\phi_{0}^{*}\theta)^{0,1}_{-1}, (\phi_{0}^{*}\theta)_{-1}^{0,1}\right \rangle|d\bar{z}|^{2} \\ \non \\
&=  |\phi_{0}^{*}\theta|^{2} + (2 - 2\cos(t))|\sigma^{*}\beta|^{2} \\ \non
&\qquad\qquad -\dfrac{1}{2}|(\phi_{0}^{*}\theta)_{1}^{1,0}|^{2}|dz|^{2}((1 - \exp(-t\sqrt{-1})) + (1 - \exp(t\sqrt{-1}))) \\ \non
&\qquad\qquad -\dfrac{1}{2}|(\phi_{0}^{*})_{-1}^{0,1}|^{2}|d\bar{z}|^{2}((1 - \exp(t\sqrt{-1})) + (1 - \exp(-t\sqrt{-1}))) \\ \non \\
&= |\phi_{0}^{*}\theta|^{2} + (2 - 2\cos(t))|\sigma^{*}\beta|^{2} \\ \non
&\qquad\qquad -(1 - \cos(t))(|(\phi_{0}^{*}\theta)_{1}|^{2} + |(\phi_{0}^{*}\theta)_{-1}|^{2})
\end{align}
We now integrate our expression over $S^{2}$ and multiply by $\frac{1}{2}$ to conclude:
\begin{align}
E[\phi_{t}] = E[\phi_{0}] + (1 - \cos(t))\int_{S^{2}}|\sigma^{*}\beta|^{2} - \dfrac{1}{2}|(\phi_{0}^{*}\theta)_{1}|^{2} - \dfrac{1}{2}|(\phi_{0}^{*}\theta)_{-1}|^{2}\vol_{S^{2}}
\end{align}
which is the desired identity.
\end{proof}
Now, notice that when $t = \pi$, we get
\begin{align}
E[\phi] - E[\phi_{0}] = 2\int_{S^{2}}|\sigma^{*}\beta|^{2} - \dfrac{1}{2}|(\phi_{0}^{*}\theta)_{1}|^{2} - \dfrac{1}{2}|(\phi_{0}^{*}\theta)_{-1}|^{2}\vol_{S^{2}} = 2C
\end{align}
so our energy gain can be written down explicitly in terms of the flag transformation and $\phi_{0}$. Next, recall that the energy gain is actually bounded below by $16\pi n_{G}$, so we get that $C \geq 8\pi n_{G}.$

Now, going back to our expression for $E[\phi_{t}]$, we can finally estimate the second variation: 
\begin{align}
\nabla^{2}E[\phi](\sigma,\sigma) =\left . \dfrac{d^{2}}{dt^{2}}\right |_{t = \pi}E[\phi_{t}] = -C \leq -8\pi n_{G}. 
\end{align}
With all of this information in hand, we can now plug $\sigma$ into our cone stability inequality \eqref{conestabineq} to obtain:
\begin{align}
\dfrac{1}{4}\int_{S^{2}}|\sigma|^{2} \geq 8\pi n_{G}
\end{align}
We can simplify further by noticing that, for every $x$, $\sigma(x)$ lies in a single Adjoint orbit of $\mathfrak{g}$ by construction, so its norm is constant. Therefore, for a given $x \in S^{2}$,
\begin{align}
\dfrac{1}{4}\int_{S^{2}}|\sigma|^{2} = \dfrac{1}{4}(4\pi)|\sigma(x)|^{2} = \pi|\sigma(x)|^{2} \geq 8\pi n_{G}
\end{align}
It will be easier for us to compute $|\sigma(x)|$ if we can assume $\sigma(x)$ is of the form $\xi_{I}$, where $\xi_{I}$ is defined with respect to a maximal toral subalgebra $\mathfrak{t} \seq \mathfrak{g}$ and positive root system of our choice. To this end, recall that $\sigma$ takes values in the set of canonical elements of parabolic subalgebras and that conjugating a canonical element yields another canonical element. Therefore, $\sigma(x)$ is conjugate to some canonical element in $\mathfrak{t}$. Such elements can all be written in the form $\xi_{I}$ up to the action of the Weyl group. Therefore, $|\sigma(x)| = |\xi_{I}|$ and we can conclude that
\begin{align}
|\xi_{I}|^{2} \geq 8 n_{G}. \label{mainstabilitycondition}
\end{align}
Our next step is to choose $I$ so that we contradict \eqref{mainstabilitycondition} and hence also the cone stability of $\phi.$
\begin{remark}
Factoring $\phi$ completely, we see that the construction of \cite{BR90} provides an explicit homotopy between our map $\phi$ and a constant map. Furthermore, because the energy of $\phi_{t}$ is decreasing as $t$ goes from $\pi$ to zero, the energy along this homotopy is decreasing. 
\end{remark}
\begin{remark}
We could have also made the opposite substitution in \eqref{subinthehalf} to express $C$ in terms of only $\sigma.$
\end{remark}
\bigskip
\section{Estimating the Norms of the Canonical Elements}
The main objective here is choosing the correct canonical element to examine. We have some freedom over the choice of the canonical element because we are able to choose the Hermitian symmetric space that our flag factor $\sigma$ will map into before we factor $\phi$. The canonical elements corresponding to different Hermitian symmetric spaces are different and more importantly can have different norms, so this is a crucial step in our proof. The main fact we require is that the Hermitian symmetric spaces associated to a compact real Lie group are precisely the quotients of the complexified Lie group by a maximal parabolic subgroup corresponding to a simple root of coefficient one in the highest root (see for example \cite{BR90}, page 97). Similarly, we will also use the fact that a simple root with coefficient one in the highest root will have coefficient $\pm 1$ or zero in the rest of the roots. These facts are useful because they narrow our search quite a bit and make the computation easier. Indeed, in the case of $E_{6}$ and $E_{7}$, the existence of such a root alone is enough to guarantee the estimate we seek. 

The above tells us that we are searching for a canonical element of the form $\xi_{i}$. In other words, the index set of the corresponding parabolic subalgebra is $I = \{i\}$ for some $i$. Choosing the correct index $i$ is crucial because some Hermitian symmetric spaces correspond to parabolic subalgebras whose canonical element actually does satisfy \eqref{mainstabilitycondition}. An example of this situation is provided below in Example \ref{badcanonicalelementexample}. Once we have found the correct simple root, we can apply standard formulas to compute the norm of the corresponding canonical element. 

In order to actually do this, we need to look at the roots explicitly. This must be done on a case by case basis, and there are six types of Lie algebras to consider: the classical Lie algebras $A_{n}$, $B_{n}$, $C_{n}$, $D_{n}$, and two of the exceptional Lie algebras $E_{6}$, and $E_{7}.$ These are the only simple Lie algebras that allow for Hermitian symmetric quotients of the corresponding Lie groups.

In our computation, we will make use of the following formula: for $x$ and $y$ in the Cartan subalgebra used to define the roots, the Killing form can be written
\begin{align}
B(x,y) = \sum_{\alpha \in \Delta}\alpha(x)\alpha(y).
\end{align}
In particular, our canonical element lies in the Cartan subalgebra by definition, so we can compute its norm as
\begin{align}
|\xi|^{2} = -\sum_{\alpha \in \Delta}\alpha(\xi)^{2}. \label{normofcanformula}
\end{align}
See e.g. \cite{FH91} Section 14.2 for more information on the Killing form. To summarize, in this section we prove the following lemma:
\begin{lemma}\label{contradictionlemma}
Let $G$ be a compact simple Lie group of type $H$ that is not $\Sp(n)$ for $n \geq 8$. We can find a simple root $\alpha_{i}$, with coefficient one in the highest root, whose dual vector $\xi_{i}$ satisfies the strict inequality $8n_{G} > |\xi_{i}|^{2}$. In other words, \eqref{mainstabilitycondition} fails.
\end{lemma}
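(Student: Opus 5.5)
The plan is to turn the norm $|\xi_{i}|^{2}$ into a count of roots, and then to compare that count with $8n_{G}$ type by type. Fix a simple root $\alpha_{i}$ whose coefficient in the highest root is one. By the fact recalled above, every root $\alpha = \sum_{j} n_{j}\alpha_{j}$ then has $n_{i} \in \{-1,0,1\}$. Since $\alpha_{j}(\xi_{i}) = \sqrt{-1}\delta_{ij}$, we get $\alpha(\xi_{i}) = \sqrt{-1}\,n_{i}(\alpha)$. Substituting into \eqref{normofcanformula} gives
\begin{align}
|\xi_{i}|^{2} = \sum_{\alpha \in \Delta} n_{i}(\alpha)^{2} = \#\{\alpha \in \Delta : n_{i}(\alpha) = \pm 1\}.
\end{align}
This is exactly $\dim_{\BC}\mathfrak{g}_{1} + \dim_{\BC}\mathfrak{g}_{-1}$, the sum of the dimensions of the $\pm\sqrt{-1}$-eigenspaces of $\ad(\xi_{i})$. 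Since $\mathfrak{g}_{1}$ is the holomorphic tangent space of the Hermitian symmetric space $H = G^{\BC}/P_{i}$, this gives $|\xi_{i}|^{2} = 2\dim_{\BC} H = \dim_{\BR} H$. The lemma is therefore reduced to exhibiting, for each type, one Hermitian symmetric quotient $H$ with $\dim_{\BR} H < 8 n_{G}$, where $n_{G}$ is taken from Table \ref{table:1}.

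Next I go through the six types, using the standard tables of irreducible Hermitian symmetric spaces and choosing $H$ as small as possible.
\begin{itemize}
\item For $A_{n-1}$, i.e.\ $\SU(n)$: take $\alpha_{1}$, so $H = \BC P^{n-1}$. Then $|\xi_{1}|^{2} = 2(n-1) < 8n$.
\item For $\Spin(n)$ with $n \geq 5$ (types $B$ and $D$): take $\alpha_{1}$, whose quotient is the quadric $Q_{n-2}$. Then $|\xi_{1}|^{2} = 2(n-2) < 8(n-2)$.
\item For $E_{6}$: take the end node giving $E_{6}/(\Spin(10)\cdot U(1))$, of complex dimension $16$, so $32 < 96$.
\item For $E_{7}$: the quotient $E_{7}/(E_{6}\cdot U(1))$ has complex dimension $27$, so $54 < 144$.
\item For $\Sp(n)$ (type $C_{n}$): the only simple root with coefficient one in the highest root $2\alpha_{1}+\cdots+2\alpha_{n-1}+\alpha_{n}$ is $\alpha_{n}$. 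Its quotient is the Lagrangian Grassmannian, of complex dimension $n(n+1)/2$. Hence $|\xi_{n}|^{2} = n(n+1)$, and this is $< 8(n+1)$ exactly when $n < 8$, which is precisely the hypothesis.
\end{itemize}

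To be safe, I would double check each count directly from explicit root systems rather than relying only on tables. For $A_{n-1}$ the relevant roots are $\pm(e_{1} - e_{j})$. For $B$ and $D$ they are $\pm e_{1} \pm e_{j}$, together with $\pm e_{1}$ in type $B$. For $C_{n}$ they are $\pm(e_{j} + e_{k})$ with $j \leq k$. For $E_{6}$ and $E_{7}$ the count $2\dim_{\BC}(G/K) = \dim G - \dim K$ follows from dimensions alone: $78 - 46 = 32$ and $133 - 79 = 54$.

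The main obstacle is a normalization issue. The formula \eqref{normofcanformula} and the values of $n_{G}$ in Table \ref{table:1} must be stated with respect to the same Killing normalization. I would verify on a small case (for example $\SU(2)$, where the highest root has squared Killing norm $1/2$, matching $n_{\SU(2)} = 2$) that $n_{G}$ is indeed the reciprocal of the squared Killing norm of the highest root. Once that convention is confirmed, every remaining inequality is a simple dimension count, and the $\Sp(n)$ case is the only one where the margin is tight.
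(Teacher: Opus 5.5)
Your proposal is correct and follows the paper's proof: you reduce $|\xi_i|^2$ via the Killing-form formula to the number of roots with $n_i(\alpha)=\pm1$, then check type by type using the same simple roots ($\alpha_1$ for types $A$, $B$, $D$ and $\alpha_n$ for $C_n$), which gives the same counts $2n$, $4n-2$, $4(n-1)$, $n(n+1)$. The only differences are cosmetic: you identify the count with $\dim_{\BR}H$ and obtain the exact values $32$ and $54$ for $E_6$ and $E_7$, whereas the paper simply bounds $|\xi_i|^2$ by the total number of roots ($72<96$ and $126<144$).
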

\begin{proof}
As mentioned, we should proceed by examining the specific cases. We follow \cite{FH91} for notation and describing the roots of these Lie algebras. See also \cite{BR90} chapter four and \cite{CP14} section five for more details on the explicit form of the canonical elements of $A_{n}.$ Note that \cite{CP14} is defining canonical elements in a slightly different context, so be careful when comparing definitions. For describing the roots of the remaining classical Lie algebras, see also \cite{CP15} section six. We set our problem up in a similar way.

We will first consider $A_{n}$. It turns out that every simple root has coefficient one in the highest root, so actually every maximal parabolic subgroup corresponds to a Hermitian symmetric space. Let us write down the roots more explicitly. Choose the maximal toral subalgebra $\mathfrak{t} \seq \mathfrak{su}(n + 1)$ consisting of the imaginary traceless diagonal matrices. We then obtain a Cartan subalgebra, $\mathfrak{t}^{\BC} \seq \mathfrak{sl}(n+1,\BC)$, given by the traceless diagonal matrices. We can compute that the roots of $\mathfrak{sl}(n + 1,\BC)$ with respect to $\mathfrak{t}^{\BC}$ are $L_{i} - L_{j}$ for $i \neq j$, where $L_{i} \in (\mathfrak{t}^{\BC})^{*}$ maps a diagonal element of $\mathfrak{sl}(n + 1,\BC)$ to the $i$th element of its diagonal. Next, a nice choice of positive roots is $L_{i} - L_{j}$ for $i < j$, and the corresponding simple roots are $\alpha_{i} = L_{i} - L_{i + 1}$ for $i = 1,\ldots,n.$ The highest root is then just the sum of all of the simple roots. Let us try $\alpha_{1}$. We see that $\alpha_{1}$ appears as a term in the roots $\pm(L_{1} - L_{j})$ for $j = 2,\ldots,n + 1$ and that the coefficient of $\alpha_{1}$ is always zero or $\pm 1$. Thus, because $\xi_{1}$ is defined to satisfy $\alpha_{i}(\xi_{1}) = \sqrt{-1}\delta_{i1}$,
\begin{align}
|\xi_{1}|^{2} = -\sum_{\alpha \in \Delta}\alpha(\xi_{1})^{2} = -2\sum_{j = 2}^{n + 1}((L_{1} - L_{j})(\xi_{1}))^{2} = -2\sum_{j = 2}^{n + 1}\sqrt{-1}^{2} = 2n
\end{align}
Now, quoting Table \ref{table:1}, $n_{\mathfrak{sl}(n + 1,\BC)} = n + 1$, so \eqref{mainstabilitycondition} reduces to $8(n + 1) \leq 2n$, i.e., $6n \leq -8$ which is never true for $n \geq 1$. Hence, \eqref{mainstabilitycondition} fails for $\SU(n)$.

For the remaining classical Lie algebras we define $\mathfrak{t}$ to be the real span of the $2n \times 2n$ matrices $E_{i} = E_{i,i} - E_{n + i,n + i}$, where $1 \leq i \leq n$ and $E_{i,i}$ has the entry $\sqrt{-1}$ at $(i,i)$ and zero everywhere else. The complexification of $\mathfrak{t}$, $\mathfrak{t}^{\BC}$, is a Cartan subalgebra for $B_{n}$, $C_{n}$, and $D_{n}$. The roots with respect to this Cartan subalgebra will now lie in $\sqrt{-1}\mathfrak{t}^{*}$, and we can write them down explicitly in terms of the elements $L_{i}$ of the basis for $\sqrt{-1}\mathfrak{t}^{*}$ dual to the $E_{i}$ in the sense that $L_{i}(E_{j}) = \sqrt{-1}\delta_{ij}$.  

Next, we consider $B_{n}$ for $n \geq 2$. We already know the result holds when $n = 1$ because $A_{1} = B_{1}.$ The roots of $B_{n}$ are $\pm L_{i} \pm L_{j}$ for $i \neq j$ and $\pm L_{i}$ for $1 \leq i \leq n$. Let us choose the positive roots to be $L_{i} \pm L_{j}$ for $i < j$ and $L_{i}$ for all $i.$ The simple roots are then $\alpha_{i} = L_{i} - L_{i + 1}$ and $\alpha_{n} = L_{n}.$ The highest root with respect to this choice of positive root system is $L_{1} + L_{2} = \alpha_{1} + \sum_{j = 2}^{n}2\alpha_{j}.$ Therefore, the only simple root leading to a Hermitian symmetric quotient is $\alpha_{1}.$ Thus, we choose $\xi_{1}$, the dual vector of $\alpha_{1}$, and compute its norm. Remember that all we need to do is find the roots in which $\alpha_{1}$ has coefficient $\pm 1$ so that we can apply \eqref{normofcanformula}. We see that $\alpha_{1}$ is a term in the roots $\pm L_{1} \pm L_{j}$ and $\pm L_{1}$, which is $4(n - 1) + 2 = 4n - 2$ different roots. Thus, we use \eqref{normofcanformula} to conclude that $|\xi_{1}|^{2} = 4n - 2$, as above. From Table \ref{table:1}, we know that $n_{\Spin(2n + 1)} = 2n - 1$. Plugging this into \eqref{mainstabilitycondition}, we achieve $4n - 2 \geq 8(2n - 1) = 16n - 8$. Equivalently, $n \leq \frac{1}{2}$, so \eqref{mainstabilitycondition} again fails.

The next case is $C_{n}$ for $1 \leq n \leq 7$. Here, the roots are $\pm L_{i} \pm L_{j}$ for $i \neq j$ and $\pm 2L_{j}$ for all $j.$ We see that a nice choice of positive roots is $L_{i} \pm L_{j}$ for $i < j$ and $2L_{j}$ for all $j$. The simple roots would then be $\alpha_{i} = L_{i} - L_{i + 1}$ and $\alpha_{n} = 2L_{n}.$ The highest root is $2L_{1} = \alpha_{n} + \sum_{j = 1}^{n - 1}2\alpha_{j}$. The only simple root with coefficient one in the highest root is $\alpha_{n}$, so this is the root we are searching for. We see that $\alpha_{n}$ has nonzero coefficient in $n(n + 1)$ of the roots and that Table \ref{table:1} reads $n_{\Sp(n)} = n + 1$, so \eqref{mainstabilitycondition} yields $n(n + 1) \geq 8(n + 1)$. Thus, \eqref{mainstabilitycondition} fails for $n < 8.$

The next case is $D_{n}$ for $n \geq 3.$ The roots are $\pm L_{i} \pm L_{j}$ for $i \neq j$, so we choose the positive root system $L_{i} \pm L_{j}$ for $i < j.$ The simple roots are then $\alpha_{i} = L_{i} - L_{i + 1}$ and $\alpha_{n} = L_{n - 1} + L_{n}.$ The highest root is $L_{1} + L_{2} = \alpha_{1} + \alpha_{n - 1} + \alpha_{n} + \sum_{j = 2}^{n - 2}2\alpha_{j}$. The simple root we will choose here is $\alpha_{1}$; if we chose $\alpha_{n - 1}$ or $\alpha_{n}$, the norm of the corresponding canonical element would be much bigger. Now, $\alpha_{1}$ has nonzero coefficient in $\pm L_{1} \pm L_{j}$, so $|\xi_{1}|^{2} = 4(n - 1)$. By Table \ref{table:1}, $n_{\Spin(2n)} = 2n - 2$. Plugging into \eqref{mainstabilitycondition}, we get $4n - 4 \geq 8(2n - 2) = 16n - 16$. In other words, $n \leq 1$, so \eqref{mainstabilitycondition} fails yet again. Note that we can ignore the case of $\Spin(4)$ because it is isomorphic to $\SU(2) \times \SU(2)$ and we already know the desired partial regularity result for $\SU(2).$

The last two cases are $E_{6}$ and $E_{7}$. We will handle these both at the same time because the same idea works for both of them. We know these have Hermitian symmetric quotients, so there exist simple roots $\alpha_{i}^{E_{6}}$ and $\alpha_{j}^{E_{7}}$ of the respective root systems corresponding to these quotients. For $E_{6}$, there are a total of 72 roots, so we see
\begin{align}
|\xi^{E_{6}}_{i}|^{2} = -\sum_{\alpha \in \Delta}\alpha(\xi_{i}^{E_{6}})^{2} \leq |\Delta| = 72.
\end{align}
On the other hand, reviewing Table \ref{table:1} and plugging into \eqref{mainstabilitycondition}, we should have $72 \geq 8(12) = 96$ which is false. Similarly, $E_{7}$ has 126 roots, so $|\xi_{j}^{E_{7}}|^{2} \leq 126.$ However, \eqref{mainstabilitycondition} says that $126 \geq 8(18) = 144$, which is again false.
\end{proof}
\begin{example}\label{badcanonicalelementexample}
We now provide an example to demonstrate that some canonical elements corresponding to a Hermitian symmetric space may satisfy \eqref{mainstabilitycondition}. Consider $A_{2n}$. One computes that $|\xi_{n}|^{2} = 2n(n + 1)$, while $8n_{\SU(2n + 1)} = 8(2n + 1).$ Thus, $\eqref{mainstabilitycondition}$ holds for $n \geq 8.$
\end{example}
\begin{remark}
Note that Table \ref{table:1} does not include values of $n_{\Spin(n)}$ for $n \leq 4$. It is well-known that Theorem \ref{mainthm} holds in these cases, so we do not need to worry about these values of $n$.
\end{remark}
\bigskip
\section{Lifting Harmonic Maps}
At this point, our work is nearly finished. In fact, we already have enough information to prove Theorem \ref{mainthm}. We will finish this proof in Section \ref{mainthmsection}.

Besides the proof of Theorem \ref{mainthm}, there are two more things we will show. In the current section, we will prove a lemma that will allow us to extend our result to certain quotients of the groups listed in Theorem \ref{mainthm}. Finally, in Section \ref{examplesection} we will construct examples of stable stationary harmonic maps $u : M \to G$ with codimension four singularities to show that our result is sharp.

To this end, we will now show that both weak harmonicity and \eqref{conestabineq} lift to the universal cover.
\begin{proposition} \label{liftingprop}
Let $(N,h)$ be a compact manifold with universal cover $p : (\wt N,p^{*}h) \to (N,h)$, and let $\phi : S^{2} \to N$ be a harmonic map satisfying our cone stability inequality \eqref{conestabineq}. There exists a lift $\wt\phi : S^{2} \to \wt{N}$ that is also a harmonic map satisfying \eqref{conestabineq}.
\end{proposition}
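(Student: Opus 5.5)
The plan rests on three facts: $S^{2}$ is simply connected, $p$ is a local isometry, and the cone stability inequality \eqref{conestabineq} is built entirely from local Riemannian data. Every quantity in \eqref{conestabineq} transfers through $dp$ pointwise.

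\textbf{Existence of the lift.} Since $\phi$ is smooth and $\pi_{1}(S^{2}) = 0$, the lifting criterion for covering spaces gives a continuous $\wt\phi : S^{2} \to \wt N$ with $p \circ \wt\phi = \phi$. Because $p$ is a local diffeomorphism, near each point $\wt\phi$ agrees with $(p|_{U})^{-1} \circ \phi$ for a sheet $U$ on which $p$ is an isometry onto its image. Hence $\wt\phi$ is smooth.

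\textbf{Harmonicity.} On each such $U$, $p|_{U}$ is an isometry onto an open subset of $N$, so it intertwines the Levi-Civita connections and tension fields: $dp(\tau(\wt\phi)) = \tau(\phi) = 0$. Since $dp$ is injective, $\tau(\wt\phi) = 0$. Thus $\wt\phi$ is harmonic; it is smooth, so harmonic in the weak sense too.

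\textbf{Transferring \eqref{conestabineq}.} The differential $dp$ gives a bundle isomorphism $\wt\phi^{*}T\wt N \to \phi^{*}TN$, fiberwise $dp_{\wt\phi(x)} : T_{\wt\phi(x)}\wt N \to T_{\phi(x)}N$, which is an isometry because the metric on $\wt N$ is $p^{*}h$. Since $p$ is a local isometry, this isomorphism carries the pullback connection $\wt\phi^{*}\nabla^{\wt N}$ to $\phi^{*}\nabla^{N}$ and carries $R^{\wt N}$ to $R^{N}$. It also sends $\nabla_{i}\wt\phi$ to $\nabla_{i}\phi$. Given $\wt X \in \Gamma(S^{2},\wt\phi^{*}T\wt N)$, set $X = dp(\wt X) \in \Gamma(S^{2},\phi^{*}TN)$. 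Then the three terms of the integrand agree pointwise:
\begin{align}
|X| = |\wt X|, \qquad |\nabla^{\phi}X| = |\nabla^{\wt\phi}\wt X|, \qquad h(R^{N}(X,\nabla_{i}\phi)\nabla_{i}\phi,X) = p^{*}h(R^{\wt N}(\wt X,\nabla_{i}\wt\phi)\nabla_{i}\wt\phi,\wt X).
\end{align}
The integrand of \eqref{conestabineq} for $(\wt\phi,\wt X)$ therefore equals the integrand for $(\phi,X)$. Since $\phi$ satisfies \eqref{conestabineq} for every section, in particular for $X$, the integral for $\wt X$ is nonnegative. As $\wt X$ was arbitrary, $\wt\phi$ satisfies \eqref{conestabineq}.

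The argument involves no real obstacle. The one step needing care is the statement that $dp$ intertwines the pullback connections and curvature tensors. This holds because the Levi-Civita connection and curvature are natural under local isometries and the computation is pointwise local. Compactness of $\wt N$ is not required, since every term is pointwise and $S^{2}$ is compact.
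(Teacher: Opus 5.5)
Your proposal is correct and follows essentially the same route as the paper: lift $\phi$ by covering space theory, identify sections along $\wt\phi$ and $\phi$ through the fiberwise isometry $dp$, and check that the norm, pullback-connection, and curvature terms of \eqref{conestabineq} agree pointwise. The only cosmetic difference is that you verify harmonicity through the tension field, whereas the paper compares $E[\wt\phi_{t}]$ with $E[p\circ\wt\phi_{t}]$ along variations; both arguments rest on the same fact that $p$ is a local isometry.
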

\begin{proof}
By general covering space theory, we may lift $\phi$ to the universal cover. Call the lift $\wt\phi.$ This entire proof boils down to the fact that $p$ is a local isometry, and in fact, the proof follows immediately from this. We will however provide all of the details in the computation.

Let $X \in \Gamma(S^{2},\phi^{*}TN).$ Then, we can lift $X$ pointwise to a vector field along $\wt\phi$ by the covering map $p : \wt N \to N.$ Call this lifted section $\wt X \in \Gamma(S^{2},\wt\phi^{*}T\wt N)$. We can do this using the isometric identification $p_{*} : T_{\wt\phi(x)}\wt N \to T_{\phi(x)}N$. $\wt X$ is smooth because $p$ is a local isometry. More precisely, in a small enough ball $B_{r}(x)$ around $x \in S^{2}$, by smoothness of $\wt\phi$, we can find an open set $U \supseteq \wt\phi(B_{r}(x))$ so that $p|_{U}$ is an isometry. Then, because $p|_{U}$ is an isometry, so is its inverse, i.e., $(p|_{U}^{-1})_{*}X|_{B_{r}(x)}$ is smooth. Similarly, if $\wt X \in \Gamma(S^{2},\wt\phi^{*}T\wt N)$, we can push it down to a section $X = p_{*}\wt X \in \Gamma(S^{2},\phi^{*}TN)$, so we get a complete identification of $\Gamma(S^{2}, \phi^{*}TN)$ with $\Gamma(S^{2},\wt\phi^{*}TN)$. This is exactly because we have already chosen the lift $\wt\phi$, so there is only one way to lift the variational field.

A quick way to see that $\wt\phi$ is harmonic is to realize that the above argument shows that variations of $\phi$ and $\wt\phi$ correspond, so let $\wt\phi_{t}$ be a variation of $\wt\phi$. We get a variation of $\phi$ denoted $\phi_{t} = p \circ \wt\phi_{t}.$ Then,
\begin{align}
E[\wt\phi_{t}] = \dfrac{1}{2}\int_{S^{2}}|\nabla\wt\phi_{t}|_{p^{*}h}^{2}\vol_{S^{2}} = \dfrac{1}{2}\int_{S^{2}}|\nabla\phi_{t}|_{h}^{2}\vol_{S^{2}} = E[\phi_{t}]
\end{align}
so $\frac{d}{dt}|_{t = 0}E[\wt\phi_{t}] = 0$ and $\wt\phi$ is harmonic.

For stability, we assume $\phi$ satisfies \eqref{conestabineq} and show $\wt\phi$ does too. Let us plug into our cone stability inequality \eqref{conestabineq}:
\begin{align}
\int_{S^{2}}\dfrac{1}{4}|\wt X|_{p^{*}h}^{2} + |\nabla^{\wt\phi}\wt X|_{p^{*}h}^{2} - (p^{*}h)(R^{p^{*}h}(\wt X, \nabla_{i}\wt\phi)\nabla_{i}\wt\phi, \wt X)\vol_{S^{2}}
\end{align} 
The first term is easy to compute: $|\wt X|_{p^{*}h} = |p_{*}\wt X|_{h} = |X|_{h}.$ For the second term, we need to be a little bit more careful. We compute this pointwise. Let $\{e_{1},e_{2}\}$ be a local orthonormal frame near $x \in S^{2}$. Then, at $x$,
\begin{align}
|\nabla^{\wt\phi}\wt X|^{2}_{p^{*}h} &= \sum_{i = 1}^{2}|\nabla_{e_{i}}^{\wt\phi}\wt X|^{2}_{p^{*}h} = \sum_{i = 1}^{2}\left |\nabla^{\wt\phi}_{e_{i}}\left (\wt X^{j}\dfrac{\p}{\p x^{j}}\right )\right |^{2}_{p^{*}h} \\ \non \\
&= \sum_{i = 1}^{2}\left |\left (\nabla_{e_{i}}\wt X^{j}\right )\dfrac{\p}{\p x^{j}} + \wt X^{j}\nabla^{\wt \phi}_{e_{i}}\dfrac{\p}{\p x^{j}}\right |^{2}_{p^{*}h}\\ \non \\
&= \sum_{i = 1}^{2}\left |\left (\nabla_{e_{i}}\wt X^{j}\right )\dfrac{\p}{\p x^{j}} + \wt X^{j}\nabla^{\wt N}_{\wt\phi_{*}(e_{i})}\dfrac{\p}{\p x^{j}}\right |^{2}_{p^{*}h} \\ \non \\
&= \sum_{i = 1}^{2}\left |p_{*}\left (\left (\nabla_{e_{i}}\wt X^{j}\right )\dfrac{\p}{\p x^{j}}\right ) + p_{*}\left (\wt X^{j}\nabla^{\wt N}_{\wt\phi_{*}(e_{i})}\dfrac{\p}{\p x^{j}}\right )\right |^{2}_{h}\\ \non \\
&= \sum_{i = 1}^{2}\left |\left (\nabla_{e_{i}}\wt X^{j}\right )p_{*}\left (\dfrac{\p}{\p x^{j}}\right ) + \wt X^{j}p_{*}\left (\nabla^{\wt N}_{\wt\phi_{*}(e_{i})}\dfrac{\p}{\p x^{j}}\right )\right |^{2}_{h}\\ \non \\
&= \sum_{i = 1}^{2}\left |\left (\nabla_{e_{i}}\wt X^{j}\right )p_{*}\left (\dfrac{\p}{\p x^{j}}\right ) + \wt X^{j}\nabla^{N}_{p_{*}\wt\phi_{*}(e_{i})}p_{*}\left (\dfrac{\p}{\p x^{j}}\right )\right |^{2}_{h}\\ \non \\
&= \sum_{i = 1}^{2}\left |\left (\nabla_{e_{i}}\wt X^{j}\right )p_{*}\left (\dfrac{\p}{\p x^{j}}\right ) + \wt X^{j}\nabla^{N}_{\phi_{*}(e_{i})}p_{*}\left (\dfrac{\p}{\p x^{j}}\right )\right |^{2}_{h}\\ \non \\
&= \sum_{i = 1}^{2}|\nabla^{\phi}_{e_{i}}p_{*}\wt X|^{2}_{h} = \sum_{i = 1}^{2}|\nabla^{\phi}_{e_{i}}X|^{2}_{h} = |\nabla^{\phi}X|^{2}_{h}
\end{align}
For the curvature terms, 
\begin{align}
(p^{*}h)(R^{p^{*}h}(\wt X, \nabla_{i}\wt\phi)\nabla_{i}\wt\phi, \wt X) &= h(R^{h}(p_{*}\wt X, p_{*}\wt\phi_{*}(e_{i}))p_{*}\wt\phi_{*}(e_{i}),p_{*}\wt X) \\
&= h(R^{h}(X,\nabla_{i}\phi)\nabla_{i}\phi,X)
\end{align}
Combining these three computations, we achieve:
\begin{align}
\int_{S^{2}}\dfrac{1}{4}|\wt X|_{p^{*}h}^{2} &+ |\nabla^{\wt\phi}\wt X|_{p^{*}h}^{2} - (p^{*}h)(R^{p^{*}h}(\wt X, \nabla_{i}\wt\phi)\nabla_{i}\wt\phi, \wt X)\vol_{S^{2}} \\
&= \int_{S^{2}}\dfrac{1}{4}|X|^{2}_{h} + |\nabla^{\phi}X|^{2}_{h} - h(R^{h}(X,\nabla_{i}\phi)\nabla_{i}\phi,X)\vol_{S^{2}} \geq 0.
\end{align}
Note that of course the same argument shows that if $\wt\phi$ satisfies \eqref{conestabineq}, then so does $\phi.$
\end{proof}
\bigskip
\section{Proof of the Main Theorem} \label{mainthmsection}
We will now recall the set up of our main theorem and prove it. Let $M$ be a compact manifold, and let $G$ be a compact simple Lie group that is not $\Sp(n)$ for $n \geq 8$, $E_{8}$, $F_{4}$, or $G_{2}$. We will use our work above to conclude a nonexistence result: there are no nonconstant cone stable maps $S^{2} \to G$. We then apply dimension reduction. From this, we conclude that stable stationary harmonic maps $u : M \to G$ are smooth away from a codimension four set. We will now reiterate this more formally.

\begin{proof}[Proof of Theorem \ref{mainthm}]
By Hsu's \cite{Hsu05} compactness theorem and a dimension reduction argument (see e.g. \cite{SU82} or \cite{HW99}), it suffices to show that there are no nonconstant cone stable maps $\phi : S^{2} \to G.$ We proceed by contradiction. If we did have such a cone stable $\phi$, then $\phi$ satisfies \eqref{mainstabilitycondition}. However, applying Lemma \ref{contradictionlemma} we find that this is impossible.
\end{proof}
Next, recall that compact simple simply connected Lie groups are the building blocks of all compact connected Lie groups. This is because the universal cover of a compact connected Lie group splits isometrically as $\BR^{k} \times G_{1} \times \ldots \times G_{\ell}$ where the $G_{j}$ are all compact simple simply connected Lie groups. 

Now, suppose $\phi : S^{2} \to K$ is a cone stable harmonic map into a compact Lie group $K$ whose universal cover $\wt K$ has isometric factors $G_{j}$ that are not $\Sp(n)$ for $n \geq 8$, $E_{8}$, $F_{4}$, or $G_{2}.$ Applying Proposition \ref{liftingprop}, we get a nonconstant harmonic map $\wt\phi : S^{2} \to \wt K$ that again satisfies \eqref{conestabineq}. We can then project $\wt \phi$ onto one of the factors of $\wt K$ such that the projection is not constant. Call this map $\wt\phi_{i}.$ Our map $\wt\phi_{i}$ is critical with respect to target variations along our chosen factor because $\wt\phi$ is critical with respect to variations of the form $(0,\ldots,0,X_{i},0,\ldots,0)$. This means $\wt\phi_{i}$ is harmonic. For essentially the same reason, we conclude that \eqref{conestabineq} also holds for $\wt\phi_{i}.$ Therefore, we get the same contradiction as in the proof of Theorem \ref{mainthm} and can conclude codimension four regularity for stable stationary harmonic maps into $K.$ Note that the same argument holds for many more isometric quotients of these groups $G.$ All we need is that the quotient map is a Riemannian covering map. 
\bigskip
\section{Examples of Singular Stable Stationary Harmonic Maps} \label{examplesection}
We will now explain why such a result is sharp. To this end we will prove the following theorem.
\begin{theorem} \label{stableconeimmersion}
Let $u : (S^{n},u^{*}g) \to (N,g)$ be a totally geodesic isometric immersion of a round sphere. Suppose that $S^{n}$ is a stable as a minimal submanifold of $N$ and that $n \geq 2$. Then the cone map associated to $u$, $\wt{u} : B_{1}^{n + 1} \to N$, is a stable stationary harmonic map.
\end{theorem}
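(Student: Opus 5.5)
We have to show that $\wt u$ is (i) weakly harmonic, (ii) stationary, and (iii) stable on $B_{1}^{n+1}$. The plan is to handle (i) and (ii) by direct computation using homogeneity of $\wt u$. For (iii), we split $u^{*}TN$ into tangent and normal parts, reduce to a pointwise-in-$r$ spectral inequality on $S^{n}$, and then use the radial Hardy inequality from the proof of the cone stability Proposition (i.e.\ \cite{SU84} Lemma 1.3).

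\emph{Steps (i) and (ii).} A totally geodesic isometric immersion is harmonic, so $\wt u$ is smooth and harmonic on $B_{1}\setminus\{0\}$. Moreover $|\nabla\wt u|^{2}=n/r^{2}$, which lies in $L^{1}(B_{1}^{n+1})$ since $n+1\geq 3$; hence $\wt u\in W^{1,2}$. A point has zero $W^{1,2}$-capacity in dimension $n+1\ge 3$, so the weak harmonic map equation extends across the origin by a standard cutoff argument.

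For stationarity, fix a compactly supported smooth vector field $Y$ and integrate $\operatorname{div}(T)$ against $Y$ on $B_{1}\setminus B_{\varepsilon}$, where $T=\frac12|\nabla\wt u|^{2}g-\wt u^{*}h$. The interior term vanishes, so only the flux $\int_{\partial B_{\varepsilon}}T(Y,\nu)$ remains. Since $d\wt u(\partial_{r})=0$, we have $T(\cdot,\nu)=\frac{n}{2\varepsilon^{2}}\nu^{\flat}$, so the flux equals
\begin{align}
\frac{n}{2\varepsilon^{2}}\int_{\partial B_{\varepsilon}}\langle Y(0),\nu\rangle+O(\varepsilon^{n-1}).
\end{align}
The first term vanishes by the symmetry $\nu\mapsto-\nu$, and the error tends to zero as $\varepsilon\to0$ because $n\geq 2$.

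\emph{Step (iii).} Since $u$ is totally geodesic, $u^{*}TN=TS^{n}\oplus\mathcal N$ splits into parallel subbundles for $\nabla^{u}$. The Gauss equation with vanishing second fundamental form shows that $R^{N}(\cdot,du(e_i))du(e_i)$ preserves this splitting, and on $TS^{n}$ it equals $\mathrm{Ric}_{S^{n}}=(n-1)$. Hence the Jacobi operator $J$ of $u$ splits as $J=J^{T}\oplus J^{\perp}$:
\begin{itemize}
\item $J^{\perp}$ is exactly the Jacobi operator of $S^{n}$ as a minimal submanifold, so $J^{\perp}\geq0$ by hypothesis;
\item $J^{T}=\nabla^{*}\nabla-(n-1)$ on vector fields.
\end{itemize}
The key claim is that the rough Laplacian on vector fields of the round $S^{n}$ has lowest eigenvalue $1$. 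I would prove this via the Hodge decomposition $X=\nabla f+Z$ with $\operatorname{div}Z=0$ and the Bochner formula. Gradient fields of first eigenfunctions satisfy $\nabla\nabla f=-fg$ and give Rayleigh quotient exactly $1$. Co-closed fields satisfy $\int|\nabla Z|^{2}\geq(n-1)\int|Z|^{2}$, with equality for Killing fields. For general gradient fields the quotient is $\lambda-(n-1)\geq 1$ when $\lambda\geq n$. Granting this, for every section $X$ of $u^{*}TN$,
\begin{align}
\int_{S^{n}}\langle JX,X\rangle\geq\Bigl(1-(n-1)\Bigr)\int_{S^{n}}|X|^{2}\geq-\frac{(n-1)^{2}}{4}\int_{S^{n}}|X|^{2},
\end{align}
and the last inequality is equivalent to $(n-3)^{2}/4\geq0$.

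For a general compactly supported section $\wt X$ along $\wt u$ (first supported away from $0$, then general by the capacity argument), repeat the polar-coordinate computation from the proof of the cone stability Proposition without assuming separated variables. The Hessian equals
\begin{align}
\int_{0}^{\infty}\!\!\int_{S^{n}}r^{n}|\nabla_{\partial_r}\wt X|^{2}\,dr+\int_{0}^{\infty}r^{n-2}\!\int_{S^{n}}\langle J\wt X(r,\cdot),\wt X(r,\cdot)\rangle\,dr.
\end{align}
We bound the second integral below slicewise by $-\frac{(n-1)^{2}}{4}\int r^{n-2}|\wt X|^{2}$. We bound the first below using Kato's inequality $|\nabla_{\partial_r}\wt X|\geq|\partial_r|\wt X||$ and the one-dimensional Hardy inequality $\int r^{n}\psi'^{2}\geq\frac{(n-1)^{2}}{4}\int r^{n-2}\psi^{2}$ (\cite{SU84} Lemma 1.3). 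Adding the two bounds shows the Hessian is nonnegative.

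I expect the main obstacle to be the sharp spectral bound for $\nabla^{*}\nabla$ on vector fields of $S^{n}$, together with checking carefully that the curvature term genuinely splits the two subbundles. The radial and capacity steps are routine.
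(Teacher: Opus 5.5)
Your proof is correct, and the stability part takes a genuinely different route from the paper.

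\textbf{The paper's route.} It splits $X = X^{\top} + X^{\perp}$ and kills the cross term by the totally geodesic condition, which is the same parallel splitting you use.
\begin{itemize}
\item For $X^{\perp}$, it simply discards the radial term $|\nabla^{\wt u}_{r}X|^{2}$ and applies minimal-submanifold stability on each sphere of radius $r$.
\item For $X^{\top}$, it writes the field as $u_{*}(Y)$ with $Y$ along the cone $\wt f(x) = x/|x|$ of the identity of $S^{n}$. It checks $\nabla^{\wt u}u_{*} = u_{*}\nabla^{\wt f}$ and uses the Gauss equation, so the two index forms agree. It then quotes the known stability (indeed minimality) of $x/|x|$.
\end{itemize}

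\textbf{Your route.} You prove the slice bound $\int_{S^{n}}\langle JX,X\rangle \geq (2-n)\int_{S^{n}}|X|^{2} \geq -\frac{(n-1)^{2}}{4}\int_{S^{n}}|X|^{2}$ from $\lambda_{1}(\nabla^{*}\nabla) = 1$ on the round sphere. Your Kato--Hardy step is then exactly the converse of the paper's cone stability Proposition: for any harmonic sphere map, \eqref{conestabineq} on $S^{n}$ implies stability of the cone.

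\textbf{What each buys.} Your argument is self-contained: it reproves the stability of $x/|x|$ rather than citing it. It also makes the margin explicit. The slack is $\frac{(n-3)^{2}}{4}$, so the spectral gap is needed only for $n \in \{2,3,4\}$, and the bound $1$ is sharp only at $n=3$. For $n \geq 5$ the trivial bound $\int|\nabla Y|^{2} \geq 0$ already suffices. The paper's route is shorter given the literature. It also transfers verbatim to any totally geodesic submanifold whose identity cone is known to be stable, as in its closing remark.

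\textbf{One point to make explicit.} When you write up the spectral bound, state that on the Einstein manifold $S^{n}$ one has $\nabla^{*}\nabla = \Delta_{H} - (n-1)$. This commutes with the Hodge projections, so the exact and co-closed pieces are orthogonal for the quadratic form $\int|\nabla X|^{2}$.

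\textbf{Stationarity.} Your direct flux computation replaces the paper's citation of Nakajima together with the balancing condition for $n = 2$. In both arguments, the constancy of $|\nabla u|^{2}$ is what handles the case $n=2$.
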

Versions of this theorem have been proved before. Notably, Ohnita and Ugadawa \cite{OU90} proved that totally geodesic stable minimal isometric immersions of manifolds with stable harmonic identity map must be stable as harmonic maps. Further, Nagano and Sumi \cite{NS92} proved the analogous result for smooth $p$-harmonic maps. The reasoning in these cases is mostly the same as the reasoning in our case, so the above theorem is really nothing new. The only technical difficulty in our case is the singularity at the origin, but this does not turn out to be a problem. Nevertheless, we will proceed with a proof.
\begin{proof}
We will first show that the cone map is a stationary harmonic map. In dimension at least three it is well-known that the cone map will be stationary (see e.g. \cite{Nak03} Theorem 1.10). In dimension two, the cone map will be stationary if the underlying sphere map satisfies the balancing condition $\int_{S^{2}}|\nabla u|^{2}x^{i} = 0$ for all of the coordinate functions $x^{1},x^{2},x^{3}$ on $\BR^{3}$ restricted to $S^{2}$ (see e.g. \cite{Har97}). This is clearly satisfied in our case because $|\nabla u|$ is constant. This is because, for a local orthonormal frame $\{e_{1},e_{2}\}$ around a given point $x \in S^{2}$,
\begin{align}
|\nabla u|^{2} = |\nabla_{e_{1}}u|_{g}^{2} + |\nabla_{e_{2}}u|_{g}^{2} = |e_{1}|_{u^{*}g}^{2} + |e_{2}|_{u^{*}g}^{2} = 2.
\end{align}
Therefore the cone map will always be stationary. 

We now move on to stability. We will show this in three steps. The first step is to reduce the problem to showing stability in the tangential and normal directions separately. The next step then is to show that stability as a minimal submanifold implies the cone map is stable in the normal directions. The last step is to show stability in the tangent directions follows from stability of the projection map $B_{1}^{n + 1} \to S^{n}.$ Again, this all is very similar to \cite{OU90} and \cite{NS92} except we must now deal with the singularity at the origin.

The first step is to decompose the index form into tangent and normal components. To do this, let $X \in \Gamma(B_{1}^{n + 1}, \wt{u}^{*}TN)$ and decompose $X = X^{\top} + X^{\perp}$ into a component tangent to $S^{n}$ and a component normal to $S^{n}$. Then, by linearity and symmetry of the index form,
\begin{align}
I^{\wt{u}}(X,X) = I^{\wt{u}}(X^{\top} + X^{\perp},X^{\top} + X^{\perp}) = I^{\wt{u}}(X^{\top},X^{\top}) + I^{\wt{u}}(X^{\perp},X^{\perp}) + 2I^{\wt{u}}(X^{\top},X^{\perp}).
\end{align}
All we need to show is that the last term vanishes. Now let us recall that by definition we have
\begin{align}
I^{\wt{u}}(X^{\top},X^{\perp}) &= \int_{B_{1}^{n + 1}}\langle\nabla^{\wt{u}}X^{\top},\nabla^{\wt{u}}X^{\perp} \rangle - g(R^{N}(X^{\top},\nabla_{i}\wt{u})\nabla_{i}\wt{u},X^{\perp})
\end{align}
To show this is zero, we simply apply the totally geodesic condition. For totally geodesic submanifolds, the Levi-Civita connection preserves the tangent and normal decomposition. Therefore, $\langle \nabla^{\wt{u}}X^{\top},\nabla^{\wt{u}}X^{\perp}\rangle = 0$ away from zero.  Similarly, the curvature $R^{N}(X,Y)Z$, for $X, Y, Z \in TS^{n}$, will remain tangent, so the curvature terms also vanish away from zero. These two observations imply that if we integrate over any annulus not including zero that the integral will be zero, so $I^{\wt u}(X^{\top},X^{\perp}) = 0$. Thus, the index form splits as we claimed.

Next we show stability in the normal directions. Recall that a totally geodesic minimal submanifold $u : M \seq N$ is said to be stable if it is an index zero critical point of the area functional with respect to normal variations. In particular, for all compactly supported normal variations $X \in \Gamma(M, T^{\perp}M)$ we have
\begin{align}
\int_{M}|\nabla^{\perp}X|^{2} - g(R^{N}(X,e_{i})e_{i},X) \geq 0.
\end{align}
where $\{e_{i}\}$ is a local orthonormal frame near a given point in $M.$ This is precisely the stability inequality for harmonic maps for normal variations. Therefore, our map $u$ is stable with respect to normal variations. We just need to extend this to cone maps. To this end, let $X \in \Gamma(B_{1}, \wt{u}^{*}T^{\perp}M).$ We will additionally need the associated vector fields $X_{r} \in \Gamma(S^{n},u^{*}TN)$ defined by $X_{r}(\omega) = X(r\omega).$ These vector fields are useful because $r^{-1}|\nabla^{u}_{v}X_{r}| = |\nabla^{\wt{u}}_{v}X|$ when $v$ is a vector along the sphere. Also, as we have done before, when computing norms we choose the local orthonormal frame near a given $x \in B_{1}^{n + 1}$ to be $\{\frac{\p}{\p r},e_{1},\ldots,e_{n}\}$, where $e_{1},\ldots,e_{n}$ is a local orthonormal frame along the sphere. Plugging $X$ into the stability inequality we see:
\begin{align}
I^{\wt{u}}(X,X) &= \int_{B_{1}^{n + 1}}|\nabla^{\wt{u}}X|^{2} - g(R^{N}(X,\nabla_{i}\wt{u})\nabla_{i}\wt{u},X)dx \\
&= \lim_{\epsilon \to 0}\int_{\epsilon}^{1}\int_{S^{n}}|\nabla^{\wt{u}}X|^{2}r^{n} - g(R^{N}(X,\nabla_{i}\wt{u})\nabla_{i}\wt{u},X)r^{n}drd\omega \\ \non \\
&= \lim_{\epsilon \to 0}\int_{\epsilon}^{1}\int_{S^{n}}|\nabla^{\wt{u}}_{r}X|^{2}r^{n} + \sum_{j = 1}^{n}(|\nabla^{\wt{u}}_{j}X|^{2} - g(R^{N}(X,\nabla_{i}\wt{u})\nabla_{i}\wt{u},X))r^{n}drd\omega \\ \non \\
&\geq \lim_{\epsilon \to 0}\int_{\epsilon}^{1}\int_{S^{n}}\sum_{j = 1}^{n}(|\nabla^{\wt{u}}_{j}X|^{2} - g(R^{N}(X,\nabla_{i}\wt{u})\nabla_{i}\wt{u},X))r^{n}drd\omega \\ \non \\
&= \lim_{\epsilon \to 0}\int_{\epsilon}^{1}\int_{S^{n}}\sum_{j = 1}^{n}(|\nabla_{j}^{u}X_{r}|^{2} - g(R^{N}(X_{r},\nabla_{i}u)\nabla_{i}u,X_{r}))r^{n - 2}drd\omega \\ \non \\
&\geq 0.
\end{align}
The last inequality of course follows from the fact that $S^{n}$ is stable as a minimal submanifold. This concludes the proof of stability in the normal directions. 

Let us now move on to why $\wt{u}$ is stable in the tangent directions. Let $\wt f$ be the cone map associated to the identity map on $S^{n}.$ The point here is that we can write any tangential vector field along $\wt u$ as the pushforward under $u$ of a vector field along $\wt f$ because $\wt u = u \circ \wt f$. We will reduce the question of stability of $\wt u$ to the well-known fact that $\wt f$ is a stable harmonic map. 

Now, let $X \in \Gamma(B_{1}^{n + 1},\wt f^{*}TS^{n}).$ Then, $u_{*}(X) \in \Gamma(B_{1}^{n + 1},\wt u^{*}TN).$ Next, let $x \in B_{1}^{n + 1}$ be nonzero, and let $\{e_{j}\}$ be a local orthonormal frame near $\wt f(x)$ in $S^{n}$. Then, $\{u_{*}(e_{j})\}$ can be extended to a local orthonormal frame near $\wt u(x).$ In particular, $\{u_{*}(e_{j})\}$ can be used to express $u_{*}(X) = u_{*}(X^{k}(x)e_{k}(\wt f(x))) = X^{k}(x)u_{*}(e_{k}(\wt f(x))).$

We now claim that $u_{*}(\nabla^{\wt f}_{v}X) = \nabla^{\wt u}_{v}u_{*}(X)$ for any $v \in T_{x}B_{1}^{n + 1}$. We compute this directly
\begin{align}
u_{*}(\nabla^{\wt f}_{v}X) &= u_{*}(\nabla^{\wt f}_{v}(X^{k}e_{k}(\wt f(x)))) \\ \non \\
&= u_{*}((\nabla_{v}X^{k})e_{k}(\wt f(x)) + X^{k}\nabla^{\wt f}_{v}e_{k}(\wt f(x))) \\ \non \\
&= u_{*}((\nabla_{v}X^{k})e_{k}(\wt f(x)) + X^{k}\nabla^{S^{n}}_{\wt f_{*}(v)}e_{k}(\wt f(x))) \\ \non \\
&= (\nabla_{v}X^{k})u_{*}(e_{k}(\wt f(x))) + X^{k}u_{*}(\nabla^{S^{n}}_{\wt f_{*}(v)}e_{k}(\wt f(x))) \\ \non \\
&= (\nabla_{v}X^{k})u_{*}(e_{k}(\wt f(x))) + X^{k}\nabla_{\wt u_{*}(v)}^{N}u_{*}(e_{k}(\wt f(x))) \\ \non \\
&= (\nabla_{v}X^{k})u_{*}(e_{k}(\wt f(x))) + X^{k}\nabla^{\wt u}_{v}u_{*}(e_{k}(\wt f(x))) \\ \non \\
&= \nabla^{\wt u}_{v}u_{*}(X).
\end{align} 
From here we can compute that if $\{v_{j}\}$ is an orthonormal basis for $T_{x}B_{1}^{n + 1}$ that
\begin{align}
|\nabla^{\wt u}u_{*}(X)|^{2} = \sum_{j = 1}^{n + 1}|\nabla^{\wt u}_{v_{j}}u_{*}(X)|^{2} = \sum_{j = 1}^{n + 1}|u_{*}(\nabla^{\wt f}_{v_{j}}X)|^{2} = \sum_{j = 1}^{n + 1}|\nabla^{\wt f}_{v_{j}}X|^{2} = |\nabla^{\wt f}X|^{2}.
\end{align}
Next we have to investigate the curvature term of the index form. We compute from the Gauss equation:
\begin{align}
g(R^{N}(u_{*}(X),\nabla_{i}\wt u)\nabla_{i}\wt u,u_{*}(X)) = (u^{*}g)(R^{S^{n}}(X,\nabla_{i}\wt f)\nabla_{i}\wt f, X).
\end{align}
Combining the above, we can see that
\begin{align}
\int_{B_{1}^{n + 1}(0) \setminus B_{\epsilon}^{n + 1}(0)}|\nabla^{\wt u}&u_{*}(X)|^{2} - g(R^{N}(u_{*}(X),\nabla_{i}\wt u)\nabla_{i}\wt u, u_{*}(X)) \\
&= \int_{B_{1}^{n + 1}(0) \setminus B_{\epsilon}^{n + 1}(0)}|\nabla^{\wt f}X|^{2} - (u^{*}g)(R^{S^{n}}(X,\nabla_{i}\wt f)\nabla_{i}\wt f, X) \geq 0.
\end{align}
so taking the limit as $\epsilon \to 0$ we get stability in the tangential directions.
\end{proof}

\begin{remark}
The above theorem can be generalized to other manifolds with cone stable identity map.
\end{remark}

Now, to produce our singular stable stationary harmonic maps, we turn to the so-called Helgason spheres constructed by Helgason in \cite{Hel66}. These are totally geodesic isometrically immersed spheres sitting inside compact irreducible symmetric spaces that are not real projective spaces. Ohnita \cite{Ohn87} showed that these spheres are stable as minimal submanifolds.

Looking at Table 1 in \cite{Ohn87} shows that we have found a stable stationary harmonic map with codimension four singular set in the Lie groups $\SU(n)$ for $n \geq 2$, $\Spin(n)$ for $n \geq 5$, $\Sp(n)$ for $n \geq 3$, $E_{6}$, $E_{7}$, $E_{8}$, $F_{4}$, and $G_{2}.$ This construction can be extended to many other compact Lie groups by constructing a singular stable stationary harmonic map into the universal cover and then projecting it down. Therefore, codimension four is sharp. 
\bigskip

\bibliography{codim4}{}
\bibliographystyle{plain}

\end{document}